\newcommand{\bz}{{\mathbf z}}
\newcommand{\bv}{{\boldsymbol v}}
\newcommand{\Var}{\mathrm{Var}}
\newcommand{\Bcal}{\mathcal{B}}
\newcommand{\Ical}{\mathcal{I}}
\newcommand{\Lcal}{\mathcal{L}}
\newcommand{\Ncal}{\mathcal{N}}
\newcommand{\Ocal}{\mathcal{O}}
\newcommand{\Ucal}{\mathcal{U}}
\newcommand{\Ycal}{\mathcal{Y}}
\newcommand{\Ebb}{\mathbb{E}}
\newcommand{\Jbb}{\mathbb{J}}
\newcommand{\Nbb}{\mathbb{N}}
\newcommand{\Pbb}{\mathbb{P}}
\newcommand{\Rbb}{\mathbb{R}}
\newcommand{\Sbb}{\mathbb{S}}
\newcommand{\bx}{{\boldsymbol x}}
\newcommand{\blambda}{{\boldsymbol \lambda}}
\newcommand{\by}{{\boldsymbol y}}
\newcommand{\bg}{{\boldsymbol g}}
\newcommand{\bh}{{\boldsymbol h}}
\newcommand{\bu}{{\boldsymbol u}}
\newcommand{\one}{\mathbbm{1}}
\newcommand{\oneb}{\boldsymbol{\mathbbm{1}}}
\newcommand{\onebf}{\boldsymbol{\mathbf{1}}}
\newcommand{\dE}{\mathbb{E}}
\newcommand{\dP}{\mathbb{P}}
\newcommand{\cN}{\mathcal{N}}
\newtheorem{theorem}{Theorem}[section]
\newtheorem*{theorem*}{Theorem}
\newtheorem{proposition}[theorem]{Proposition}
\newtheorem{definition}[theorem]{Definition}
\newtheorem{remark}[theorem]{Remark}
\newtheorem*{remark*}{Remark}
\newtheorem{lemma}[theorem]{Lemma}
\DeclareMathOperator*{\argmax}{arg\,max}
\DeclareMathOperator*{\argmin}{arg\,min}
\title{The value of random zero-sum games
}
\date{January 2026}
\author{Romain Cosson \\ {\small \texttt{romain.cosson@nyu.edu}} \and Laurent Massoulié \\ {\small \texttt{laurent.massoulié@inria.fr}}}
\begin{document}

\maketitle
\begin{abstract}
    We study the value of a two-player zero-sum game on a random matrix $M\in \Rbb^{n\times m}$, defined by $v(M) = \min_{\bx\in\Delta_n}\max_{\by\in \Delta_m}\bx^TM \by$. In the setting where $n=m$ and $M$ has i.i.d. standard Gaussian entries, we prove that the standard deviation of $v(M)$ is of order $\frac{1}{n}$. This confirms an experimental conjecture dating back to the 1980s. 
    We also investigate the case where $M$ is a rectangular Gaussian matrix with $m = n+\lambda\sqrt{n}$, showing that the expected value of the game is of order $\frac{\lambda}{n}$, as well as the case where $M$ is a random orthogonal matrix.
    Our techniques are based on probabilistic arguments and convex geometry. 
    We argue that the study of random games could shed a new light on various problems in theoretical computer science.   
\end{abstract}

\thispagestyle{empty}

\clearpage
\thispagestyle{empty}

\tableofcontents
\clearpage
\pagenumbering{arabic}

\section{Introduction}
In a 1928 paper \cite{v1928theorie} titled ``On the theory of strategic games'',  John von Neumann established that for any real matrix $M\in \Rbb^{n\times m}$,
\begin{equation}\label{eq:vonNeumann}
   v(M) = \min_{\bx\in \Delta_n} \max_{\by\in \Delta_m} \bx^T M \by = \max_{\by\in \Delta_m} \min_{\bx\in \Delta_n} \bx^TM\by  
\end{equation}
where $\Delta_n=\{\bx\in \Rbb_+^n : \sum_i x_i = 1\}$ is the $n$-dimensional probability simplex and where $v(M)$ is called the value of the two-player zero-sum game defined by $M$. This influential result is known as von Neumann's minimax theorem.

The connection to game theory is as follows. 
Consider the strategic game where player~$1$ (resp. player $2$) chooses a row $i\in [n]$ (resp. a column $j\in [m]$) of a matrix $M\in\Rbb^{n\times m}$, and where the payoff $M_{ij}$ is transferred from player $1$ to player $2$. In this game, it is clearly valuable for a player to know the strategy of their opponent before they commit to their own choice. Yet, von Neumann's minimax theorem implies that if the players are allowed to use so-called \textit{mixed strategies}---probability distributions over rows (resp. columns)---the order in which they play does not influence the value of the game. Mathematically, this discussion is summarized as follows:
\begin{equation}\label{eq:saddle}
   \max_{j\in [m]} \min_{i\in [n]} M_{ij} \leq \max_{\by\in \Delta_m} \min_{i\in [n]}(M\by)_i = v(M) =  \min_{\bx\in \Delta_n} \max_{j\in [m]} (\bx^TM)_j \leq \min_{i\in [n]} \max_{j\in [m]} M_{ij}.
\end{equation}
The quantity $v(M)$ stands out as a fundamental quantity associated with the matrix $M$, and plays a central role in the analysis of algorithms. In this paper, we study $v(M)$ when $M$ is a random matrix, continuing a line of work that began in the 1960s. The words of Thomas M. Cover, in 1966, still very much apply today: \textit{``Even in the
simplest cases, determining the distribution of the value of a random game is
largely an unsolved problem''} \cite{cover1966probability}.

\paragraph{Main results.}  We apply modern techniques in probability and convex geometry to improve our understanding of the distribution of $v(M)$. Specifically, we provide tail bounds for $v(M)$ when $M\in\Rbb^{n\times m}$ is a random matrix with i.i.d. standard Gaussian entries and $m/2\leq n \leq 2m$.
For square matrices, our results imply that the standard deviation of the value of a random game is $\Theta(1/n)$, validating an experimental conjecture due to Faris and Maier \cite{faris1987value} and improving upon the previous $\Ocal(1/\sqrt{n})$ bound of Jonasson \cite{jonasson2004optimal}. 
For rectangular matrices, we show that there is a competition between the noise and the aspect ratio of the matrix in the regime where $m = n +\lambda \sqrt{n}$. Specifically, in that regime, we show that the expectation of $v(M)$ is of order $\lambda/n$ with fluctuations of $v(M)$ being of order $1/n$. We then study the case where $M\in \Rbb^{n\times n}$ is a random orthogonal matrix, providing tail bounds for $v(M)$ that directly imply a standard deviation in $\Ocal(1/n^{3/2})$. For both Gaussian and orthogonal matrices, we bring new powerful tools to the study of random games, which we outline in Section \ref{sec:approach-comp} (for the Gaussian part) and in Section \ref{sec:background-convex} (for the orthogonal part). 

\paragraph{Connection to theoretical computer science.} In the analysis of algorithms and decision rules, von Neumann's minimax theorem is more commonly known as Yao's minimax principle, in reference to the computer scientist Andrew Yao. The principle states that, for a given algorithmic problem, the worst-case performance of the best randomized algorithm is equal to the performance over the worst input distribution of the best deterministic algorithm. 
The principle is a direct application of Equation \eqref{eq:vonNeumann} to the matrix $M$ with rows corresponding to algorithms and columns corresponding to inputs, where the cell $M_{ij}$ corresponds to the cost of algorithm $i\in[n]$ on input $j\in [m]$, and where the value $v(M)$ corresponds to the complexity of the algorithmic problem represented by $M$. 
The first application of the principle by Yao \cite{yao1977probabilistic} was a proof that any randomized algorithm testing the existence of a perfect matching in a graph requires $\Omega(|V|^2)$ probes in expectation, where $|V|$ is the number of nodes in the graph. The framework is now widespread, appearing across various fields such as online algorithms, communication complexity, and computational complexity (see examples in \cite{motwani1996randomized,borodin2005online,wigderson2019mathematics}). In the last section of the paper, we discuss the use of random matrices to explore the magnitude of the advantage that randomization confers in worst-case analysis, and we highlight open questions that we believe relevant to the theoretical computer science community.

\paragraph{Relation to statistical physics.} A central aim of statistical physics is to understand the typical behavior of large and complex systems by studying probabilistic models of their microscopic structure. This point of view has proved remarkably fruitful for identifying emergent phenomena, such as phase transitions, in which a macroscopic property changes abruptly as a parameter crosses a threshold. Classic examples include percolation on random graphs \cite{bollobas2011random} where a giant component appears at average degree one, and the satisfiability threshold in random $k$-SAT \cite{mertens2006threshold}.

Random games are a natural counterpart of such models in the game-theoretic setting. Here, the “microscopic” randomness lies in the entries of the payoff matrix, which represent noisy or heterogeneous interactions between strategies. The “macroscopic” observable is the game value $v(M)$. From a statistical physics perspective, $v(M)$ plays a role analogous to the ground-state energy of a disordered system---it is the outcome of a global optimization (minimization over one player’s strategy, maximization over the other’s) on a random energy landscape. This parallel suggests that random games may exhibit their own forms of “game-theoretic phase transitions”, where, for example, the sign of $v(M)$ changes sharply as the aspect ratio $m/n$ crosses $1$. This particular phenomenon was formally proven by Cover \cite{cover1966probability}.  

As we will see, the toolkit developed in modern statistical physics is suited to the study of random games. Specifically, we find that Gaussian comparison inequalities \cite{gordon1985some,ledoux2013probability}---which have been used to analyze phase transitions in statistical settings \cite{miolane2021distribution}---are applicable in this context. 
Our work thus fits into the broader program of leveraging ideas from statistical physics and probability to understand scaling laws of algorithmic problems (see, e.g., \cite{decelle2011asymptotic}). In the same way that random graphs became canonical objects in the study of typical-case behavior (e.g., spectral algorithms for clustering \cite{abbe2018community}), we argue that random games merit attention and form a clean, analytically tractable model at the interface of probability, optimization, and game theory. For further connections to statistical physics, we refer to \cite{berg1998matrix,berg2000statistical} who approach the problem of random games with the so-called `replica' heuristic in the regime where $m=\alpha n$ for some constant $\alpha\in \Rbb$.

\paragraph{Previous works.}
Starting in the 1960s, with the work of Chernoff and Teicher \cite{chernoff1965limit}, there has been a limited but steady line of work studying random two-player zero-sum games. Initially \cite{chernoff1965limit} studied the value of pure strategies, describing the class of limiting distributions for $\min_{i\in [n]} \max_{j\in [m]}M_{ij}$. 
This is unfortunately of little help in the study of $v(M)$, because the inequalities $\max_{j\in [m]}\min_{i\in [n]} M_{ij}\leq v(M)\leq \min_{i\in [n]} \max_{j\in [m]} M_{ij}$ are typically very loose. In the special case where these inequalities are tight (i.e., conditioned on the unlikely event that the matrix $M$ has a saddle point) Thrall and Falk \cite{thrall1965some} explicitly derived the distribution of $v(M)$. 
The first general result on the value of $v(M)$ in a random setting analog to ours was by Cover \cite{cover1966probability} who showed that $\Pbb(v(M)> 0) = \Pbb(X\leq m)$ where $X$ is a binomial random variable with parameters $X\sim \Bcal(1/2,n+m-1)$. This result implies that $v(M)$ is almost always positive when $m>>n+\sqrt n$, which is consistent with the tail bounds provided in this paper. The proof of Cover uses a geometric argument, attributed to Schläfli \cite{schlafli1950gesammelte}, on the probability that a random subspace of dimension $n$ in $\Rbb^m$ intersects the positive orthant, and has the notable advantage of being applicable to a wider set of distributions. We discuss their work in more detail in Section \ref{sec:orth}, where we explain our own geometric argument. 
Later, in the 1980s, despite the lack of theoretical progress on the study of the value of $v(M)$, an empirical work of Faris and Maier \cite{faris1987value} conjectured that $|v(M)|$ is typically of order $\Theta(1/n)$ for $M\in\Rbb^{n\times n}$ with i.i.d. Gaussian entries. They also noticed that the cardinality of the support of optimal strategies is very close to a Binomial distribution $\Bcal(1/2,n)$. More recently, Jonasson \cite{jonasson2004optimal} proved that the cardinality of the support of optimal mixed strategies is within $[0.1n,0.9n]$ with high probability. 
He also proved that $|v(M)|$ is of order $O\left(1/\sqrt{n}\right)$ with high probability, slightly improving over the naive $O\left(\sqrt{\log n}/\sqrt{n}\right)$ bound (see Section \ref{sec:prelim})---and highlighted the important question of showing that the game value is effectively in $\Theta\left(1/n\right)$ (Conjecture 2.3 in \cite{jonasson2004optimal}).

We note that random $N$-player games \cite{rinott2000number,mclennan2005expected,amiet2021pure,heinrich2023best}, random bimatrix games \cite{mclennan2005asymptotic,alon2021dominance}, and random zero-sum games \cite{roberts2006nash,brandl2017distribution} are also studied in the economics literature. Specifically, Roberts \cite{roberts2006nash} investigated the game value for a heavy-tailed (Cauchy) distribution, and Brandl \cite{brandl2017distribution}, showed that for random skew-symmetric matrices, the support of the optimal strategy of the corresponding zero-sum game is uniformly distributed on the subsets of $[n]$ that are of odd cardinality. Random zero-sum games have also been studied in biology \cite{cohen1989host}, mathematics~\cite{tanikawa2010stability}, and statistical physics \cite{berg1998matrix,berg2000statistical}.

\paragraph{Organization of the paper.} The paper is organized as follows. First, Section \ref{sec:prelim} reviews preliminary results and definitions that will be used throughout the paper. Then, Section \ref{sec:main} provides our main result for Gaussian matrices. This section relies on Gaussian comparison principles, which are reviewed in Appendix \ref{sec:gaussian-comparison}. After, Section \ref{sec:orth} provides our results on orthogonal matrices, which rely on geometric arguments. Finally, Section \ref{sec:open-directions} highlights open directions in the study of random games that we find particularly relevant to computer science.

\section{Preliminaries} \label{sec:prelim}
\subsection{Notations and definitions}
\textit{Generals.} $M\in \Rbb^{n\times m}$ represents a matrix with $n$ rows and $m$ columns, with $n,m\geq 2$. By symmetry, we will often restrict our attention to the case where $m\geq n$.  We use $I_n\in\Rbb^{n\times n}$ to denote the identity matrix (when $n=m$).   $\Delta_n = \{\bx\in \Rbb^n: \forall i: x_i \geq 0 ~\sum_i x_i =1 \}$ is the $n$-dimensional probability simplex. We will adopt the convention that player $1$ is the \textit{row player}, choosing a pure strategy $i\in [n]$, or a mixed strategy $\bx\in \Delta_n$; and that player $2$ is the \textit{column player}, choosing a pure strategy $j\in [m]$, or a mixed strategy $\by\in \Delta_m$. We denote by $\onebf_n\in \Rbb^n$ the vector of all ones. When the dimension is clear from context, for example, in the context of matrix-vector multiplication, we will drop the subscript. We refer to the vector $\frac{1}{n}\onebf_n \in \Delta_n$ as the uniform strategy.  

\textit{Gaussian variables.} For a real-valued random variable $X$, we denote its expectation by $\Ebb(X)$, its variance by $\Var(X)$, and its standard deviation by $\sigma(X) = \sqrt{\Var(X)}$.  A real-valued random variable $X$ is said to follow a standard Gaussian distribution $X\sim \Ncal(0,1)$ if it has density $\varphi(x) = \frac{1}{\sqrt{2\pi}} \exp\!\left(-\frac{x^2}{2}\right)$. 
Note that $\Ebb(X\oneb_{X>a}) = \int_{a}^\infty x\varphi(x)dx = \varphi(a)$ because $\forall x\in \Rbb: x\varphi(x) =-\varphi'(x)$. We will also use the following classical tail inequality for $x\geq 0 : \Pbb(X\geq x) = 1-\Phi(x) \leq \exp\left(-\frac{x^2}{2}\right)$, where $\Phi(x) = \int_{-\infty}^x\phi(t)dt$ is the cumulative distribution function. 

\textit{Gaussian vector.} For a symmetric positive definite matrix $\Sigma\in S^n_{++}$, we say that a vector $\bg\in \Rbb^n$ follows a centered multivariate Gaussian distribution with covariance matrix $\Sigma\in \Sbb^n_{++}$ and we write $\bg\sim \Ncal(0,\Sigma)$ if it has density $\varphi_{\Sigma}(\bg) = \frac{1}{\sqrt{(2\pi)^n \det\Sigma}}\exp(-\frac{1}{2}\bg^T\Sigma^{-1}\bg)$.  

\textit{Chi distribution.} The norm of the Gaussian vector $\bg\sim \Ncal(0,I_n)$ follows the so-called chi distribution with $n$ degrees of freedom, $||\bg||\sim \chi_n$. Classically, one has, $a_n = \Ebb(||\bg||) = \sqrt{2}\frac{\Gamma(\frac{n+1}{2})}{\Gamma(\frac{n}{2})},$
where $\Gamma(x) = \int_{0}^\infty t^{x-1}\exp(-t)dt$ is the Gamma function. Observing that $(a_n)_{n\in \Nbb}$ satisfies $a_n a_{n+1} = n$ and $a_n^2 \leq \Ebb(||\bg||^2) = n$ (by Jensen), we obtain for all $n\geq 1$,
\begin{equation}\label{eq:bound-a}
    \sqrt{n} -\frac{1}{2\sqrt{n}}\leq a_n \leq \sqrt{n}.
\end{equation}
Note that a complete series expansion of $(a_n)_{n\in \Nbb}$ can be obtained by induction (see, e.g., \cite{gordon1985some}), but Equation \eqref{eq:bound-a} suffices for the purposes of this paper.

\textit{Positive part.} For a Gaussian vector $\bg\sim \Ncal(0,I_n)$, we denote by $\bg^+ = (\max\{0,g_i\})_{i\in[n]}$ its positive component and $\bg^- = (\max\{0,-g_i\})_{i\in [n]}$ its negative component so that $\bg = \bg^+-\bg^-$. For $i\in [n]$, direct computation gives $\Ebb(g_i^+) 
= \frac{1}{\sqrt{2\pi}}$. By Jensen's inequality $\Ebb(||\bg^+||)\leq \sqrt{\Ebb(||\bg^+||^2)} = \sqrt{\frac{n}{2}}$. We also have the lower bound $\Ebb(||\bg^+||)\geq \sqrt{\frac{n}{2}} - \frac{3}{\sqrt{n}}$ (derived in Appendix \ref{ap:technical-binom}).

\textit{Gaussian concentration.} (see, e.g., Th. 5.6 of \cite{boucheron2013}). Let $f:\Rbb^n\rightarrow\Rbb$ denote a $1$-Lipschitz function for the Euclidean norm and $\bg \sim \Ncal(0,I_n)$. Then, for all $t>0$,
$$\Pbb(f(\bg)-\Ebb(f(\bg))>t)\leq \exp\left(-t^2/2\right).$$ Throughout the paper, we will usually apply this concentration inequality to the following $1$-Lipschitz functions $\bg\rightarrow \frac{1}{\sqrt{n}}\sum_i g_i^+$,  as well as to $\bg \rightarrow||\bg||$, and $\bg \rightarrow ||\bg^+||$. 

\subsection{Preliminaries on random games}
In this section, we present some important preliminary results. 
\begin{proposition}[see, e.g., \cite{jonasson2004optimal} for further details]\label{prop: prelim}
Let $M\in\Rbb^{n\times m}$ be a random matrix with i.i.d. standard Gaussian entries (cf. Section \ref{sec:main}), or $M\in\Rbb^{n\times n}$ be a uniformly random orthogonal matrix with $m=n$ (cf. Section \ref{sec:orth}). With probability one, the solution $(\bx,\by,v)$ of Equation \eqref{eq:vonNeumann} is unique and the following equalities hold: 
\begin{align*}
    C &= \{j\in [m] : y_j>0\} = \{j\in [m] : (\bx^TM)_j = v\},\\
    R &= \{i\in [n] : x_i>0\} = \{i\in [n] : (M\by)_i = v\},
\end{align*}
where $C$ the set of supporting columns and $R$ the set of supporting rows have the same cardinality $|C|=|R|$.  We also have, denoting by $(M_{RC}, \bx_R,\by_C)$ the restrictions of $(M,\bx,\by)$ to rows $R$ and columns $C$, that $M_{RC}$ is almost surely invertible and that
\begin{equation}\label{eq:explicit}
    \by_C = v M_{RC}^{-1}\onebf\quad\text{and}\quad  \bx_R = v \onebf^T M_{RC}^{-1} \quad \text{with} \quad v = \frac{1}{\onebf^T M_{RC}^{-1}\onebf}.
\end{equation}
\end{proposition}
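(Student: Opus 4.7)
The plan is to combine LP duality with an almost-sure non-degeneracy argument. First, I would reformulate Equation~\eqref{eq:vonNeumann} as a pair of primal-dual LPs: the primal maximizes $v$ subject to $M\by \geq v\onebf$ and $\by \in \Delta_m$, while the dual minimizes $v$ subject to $\bx^T M \leq v\onebf^T$ and $\bx \in \Delta_n$. These share the common optimum $v(M)$ by von Neumann's theorem, and complementary slackness immediately gives the inclusions $\{j : y_j > 0\} \subseteq \{j : (\bx^T M)_j = v\}$ and $\{i : x_i > 0\} \subseteq \{i : (M\by)_i = v\}$ for any optimal pair $(\bx,\by)$.

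Next, I would establish \emph{almost-sure non-degeneracy}: with probability one, for every pair of index sets $R' \subseteq [n]$ and $C' \subseteq [m]$ with $|R'|=|C'|$, the submatrix $M_{R'C'}$ is invertible and every coordinate of $M_{R'C'}^{-1}\onebf$ and of $\onebf^T M_{R'C'}^{-1}$ is non-zero, and no ``extra'' linear relation of the form $M_{iC'} = \bx_{R'}^T M_{R'C'}$ holds for $i \notin R'$ (and symmetrically for columns). Each forbidden event is cut out by a non-trivial polynomial equation in the entries of $M$; for the i.i.d. Gaussian model this is immediately a Lebesgue-null set, and a union bound over the finitely many $(R',C')$ closes the argument. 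For the Haar-random orthogonal model the same strategy applies once one invokes absolute continuity of the Haar measure on $\mathrm{O}(n)$ in smooth local charts (for instance via the exponential map from the Lie algebra $\mathfrak{so}(n)$), so that any proper algebraic subvariety of $\mathrm{O}(n)$ has Haar measure zero.

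Conditional on non-degeneracy, all remaining claims follow essentially in one stroke. Uniqueness of $(\bx,\by,v)$ is standard LP theory: a second optimum would make the optimal set a face of positive dimension, along which the entries of $M$ would satisfy a prohibited affine relation. The reverse inclusions $\{j : (\bx^T M)_j = v\} \subseteq \{j : y_j > 0\}$ and its row counterpart hold for exactly the same reason, since an extra tight-but-slack index would impose one excluded linear identity on $M$. The equality $|R|=|C|$ is the basic-feasible-solution counting identity for the primal: the number of tight constraints $|R| + (m-|C|) + 1$ must equal the dimension $m+1$ of the variable $(\by,v)$, forcing $|R|=|C|$. Invertibility of $M_{RC}$ is part of the non-degeneracy above, and inverting the tight systems $M_{RC}\by_C = v\onebf$ and $\bx_R^T M_{RC} = v\onebf^T$, together with the normalization $\onebf^T \by_C = 1$, yields \eqref{eq:explicit}.

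The main obstacle in my view is the genericity step in the orthogonal case: unlike i.i.d. Gaussian entries, the entries of a Haar-random $M$ satisfy the non-trivial algebraic relation $MM^T = I_n$, so it is not automatic that the forbidden loci above have positive codimension inside $\mathrm{O}(n)$. To handle this cleanly one can fix a local smooth chart, exhibit explicit points of $\mathrm{O}(n)$ where each defining polynomial does not vanish, and then pass from positive codimension in the chart to Haar-null in $\mathrm{O}(n)$ by absolute continuity.
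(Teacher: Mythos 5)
Your proposal is correct and follows essentially the same route as the paper: reformulate \eqref{eq:vonNeumann} as a primal--dual pair of LPs, use complementary slackness for the "easy" inclusions, and use almost-sure non-degeneracy of the constraint polytope both for uniqueness and for the tight-constraint counting that forces $|R|=|C|$ and the reverse inclusions, after which \eqref{eq:explicit} is just solving the tight linear system. In fact you are slightly more careful than the paper on one point: the paper dispatches the genericity step by noting it holds "almost surely for random vectors with a density" and leaves the orthogonal case implicit, whereas you correctly flag that Haar measure on $\mathrm{O}(n)$ is singular with respect to Lebesgue measure on $\Rbb^{n\times n}$, so one must work in a local chart (e.g., via the exponential map from $\mathfrak{so}(n)$) and argue that the forbidden algebraic loci are proper subvarieties of $\mathrm{O}(n)$ and hence Haar-null. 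That extra step is genuinely needed and is a welcome clarification rather than a deviation.
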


\begin{proof}[Proof sketch]
    Solutions $(\bx,\by,v)$ of Equation \eqref{eq:vonNeumann}  are given by the following linear optimization problem.
\begin{equation}\label{eq:primal-dual}
\begin{aligned}
&\text{Primal}\\
\text{minimize} &\quad v \\
 \text{variables} &\quad v \in \mathbb{R}, \quad \bx \in \mathbb{R}_+^n \\
 \text{subject to} &\quad \bx^T \onebf = 1 \\
 \phantom{\text{subject to}} &\quad \bx^T M \leq v \onebf
\end{aligned}
\qquad\qquad
\begin{aligned}
&\text{Dual}\\
\quad
 \text{maximize} &\quad v \\
 \text{variables} &\quad v \in \mathbb{R}, \quad \by \in \mathbb{R}_+^m \\
 \text{subject to} &\quad \onebf^T \by = 1 \\
 \phantom{\text{subject to}} &\quad M \by \geq v \onebf
\end{aligned}
\end{equation}
Assuming that no edge of the constraint polytope is orthogonal to the objective vector, we get the uniqueness of $(\bx,\by,v)$. This condition is satisfied almost surely for random vectors with a density.

In the linear program \eqref{eq:primal-dual}, $\by \in \Rbb_+^m$ are the dual variables corresponding to the constraint $\bx^T M \leq v\onebf$, and  $\bx\in \Rbb_+^n$ are the dual variables corresponding to the constraints $M\by\geq v\onebf$. Therefore, considering the optimal primal dual pair $(\bx,v)$, $(\by,v)$ and denoting 
\begin{equation*}
    \begin{cases}
    C = \{j\in [m]: (\bx^TM)_j = v\}\\
    C' = \{j\in [m] : y_j=0\}
    \end{cases} \quad \text{and} \quad 
    \begin{cases}
        R = \{i\in [n]: (M\by)_i = v\}\\
        R' = \{i\in [n] : x_i = 0\}
    \end{cases},
\end{equation*}
we have by complementary slackness that $C\cup C' = [m]$ and $R\cup R' =[n]$. Furthermore, assuming that the constraints are nondegenerate (i.e., the number of active constraints at any feasible point is at most equal to the dimension), the optimal solution $(\by,v)$ saturates exactly $m$ inequality constraints, and $(\bx,v)$ saturates exactly $n$ inequality constraints. Therefore, $|C| + |R'| = n$ and $|R| + |C'| = m$. From there, we deduce that $C = [m]\setminus C'$ and $R = [n]\setminus R'$. Note that non-degeneracy of the constraint polytope is satisfied almost surely for the random matrices we consider. Finally, \eqref{eq:explicit} follows directly from the definition of $C$ and $R$.
\end{proof}
\begin{remark}[Symmetry of game value]\label{rem:symmetry} It directly follows from the linear programming formulation that for any matrix $M\in\Rbb^{n\times m}$ we have $v(-M^T) = -v(M)$. Thus, for random square matrices, if we have the following equality in distribution, $M=^{dist}-M^T$, we immediately get $\Ebb(v(M)) = 0$.
\end{remark}
We now present a simple tail bound on the value of a random two-player game. This result follows directly from the observation that the row player can always use the uniform strategy. 
\begin{proposition}\label{prop:naive-uniform}
    Let $M\in \Rbb^{n\times m}$ be a random matrix whose entries are i.i.d. $\Ncal(0,1)$, then, \begin{equation}\label{eq:tail-bound-naive}
        \Pbb(v(M)\geq t)\leq m\exp(-nt^2/2).
    \end{equation}
\end{proposition}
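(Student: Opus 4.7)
The plan is to exploit the basic inequality that using any fixed strategy for the row player gives an upper bound on $v(M)$, and to pick the uniform strategy $\bx = \frac{1}{n}\onebf_n$ so that the resulting random quantity has small variance.

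First I would observe that, since $\bx = \frac{1}{n}\onebf_n \in \Delta_n$ is a valid (suboptimal) choice for the minimizing player in \eqref{eq:vonNeumann}, we have
\begin{equation*}
    v(M) \;=\; \min_{\bx\in\Delta_n}\max_{\by\in\Delta_m} \bx^T M \by \;\leq\; \max_{\by\in\Delta_m} \tfrac{1}{n}\onebf^T M \by \;=\; \max_{j\in[m]} \tfrac{1}{n}\sum_{i=1}^{n} M_{ij},
\end{equation*}
where the last equality follows because a linear function on the simplex $\Delta_m$ attains its maximum at a vertex.

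Next I would note that for each fixed $j\in[m]$, the random variable $\frac{1}{n}\sum_i M_{ij}$ is a sum of $n$ i.i.d. $\Ncal(0,1)$ variables rescaled by $1/n$, hence distributed as $\Ncal(0,1/n)$. Equivalently, $\sqrt{n}\cdot \tfrac{1}{n}\sum_i M_{ij} \sim \Ncal(0,1)$, so the standard Gaussian tail bound $\Pbb(\Ncal(0,1) \geq x) \leq \exp(-x^2/2)$ recalled in Section \ref{sec:prelim} gives, for every $t\geq 0$,
\begin{equation*}
    \Pbb\!\left(\tfrac{1}{n}\sum_{i=1}^n M_{ij} \geq t\right) \;\leq\; \exp\!\left(-nt^2/2\right).
\end{equation*}

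Finally I would conclude by a union bound over the $m$ columns:
\begin{equation*}
    \Pbb(v(M)\geq t) \;\leq\; \Pbb\!\left(\max_{j\in[m]} \tfrac{1}{n}\sum_{i=1}^n M_{ij} \geq t\right) \;\leq\; \sum_{j=1}^m \Pbb\!\left(\tfrac{1}{n}\sum_{i=1}^n M_{ij} \geq t\right) \;\leq\; m\exp(-nt^2/2),
\end{equation*}
which is exactly \eqref{eq:tail-bound-naive}. There is no real obstacle here: the only subtlety is remembering that $v(M)$ is a min-max and so any feasible $\bx$ yields an upper bound, and that the uniform choice produces $m$ Gaussians with variance $1/n$ rather than variance $1$, which is what gives the factor of $n$ in the exponent (as opposed to the naive $\sqrt{\log n/n}$ bound one would get from controlling the $L^\infty$ norm of all $nm$ entries directly).
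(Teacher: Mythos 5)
Your proof is correct and follows exactly the same route as the paper's: bound $v(M)$ by playing the uniform row strategy $\frac{1}{n}\onebf_n$, note that the resulting column averages are i.i.d.\ $\Ncal(0,1/n)$, and conclude by the Gaussian tail bound together with a union bound over the $m$ columns. There is no gap and no meaningful difference from the paper's argument.
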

\begin{proof}
    Since the uniform strategy $\frac{1}{n}\onebf\in \Delta_n$ can be used by the row player, we clearly have that
    $v(M)\leq \frac{1}{n}\max_{j\in [m]}\left(\onebf^TM\right)_j$, which is the maximum of $m$ independent and identically distributed variables following $\Ncal(0,1/n)$.
    Therefore, we have, by union bound, for all $t\geq 0$,
    \begin{align*}
        \Pbb(v(M)\geq t)&\leq m(1-\Phi(t\sqrt{n}))\\
        &\leq m\exp(-nt^2/2),
    \end{align*}
    where the last line uses the standard Gaussian tail inequality $1-\Phi(x)\leq \exp(-x^2/2)$.
\end{proof}
\begin{remark}[Interpretation and consequences]
An immediate consequence of this result is the following high-probability bound: for all $t\geq 0$, and $M\in \Rbb^{n\times m}$ with i.i.d. Gaussian entries,
\begin{equation*}
    \Pbb\left(-t\sqrt{\frac{\log n}{m}}\leq v(M)\leq t\sqrt{\frac{\log m}{n}}\right)\geq 1-2\exp(-t^2/2).
\end{equation*}
This bound is well-known in the literature (see, e.g., \cite{faris1987value}), but it is imprecise. Firstly, for $n=m$, it does not match the $\Theta(1/n)$ conjectured order for $v(M)$. Secondly, for $m>>n+\sqrt{n}$, it does not reflect the fact (shown in Cover \cite{cover1966probability}) that the value of the game is almost always positive.   
\end{remark}
We conclude the section with the following lower bound on the variance of $v(M)$. 
\begin{proposition}\label{prop:lb}Let $M\in \Rbb^{n\times m}$ be a random matrix whose entries are i.i.d. $\Ncal(0,1)$, then, 
\begin{equation*}
    \Var(v(M))\geq \frac{1}{nm}.
\end{equation*}
\end{proposition}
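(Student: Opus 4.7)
The plan is to combine a Gaussian variance lower bound with the envelope theorem and a symmetry argument on the partial derivatives of $v(M)$. First, by Proposition \ref{prop: prelim} the saddle-point pair $(\bx,\by)$ is almost surely unique, which via Danskin's theorem applied to the bilinear game value implies that $v$ is almost surely differentiable at $M$, with
$$\frac{\partial v(M)}{\partial M_{ij}} \;=\; x_i y_j.$$
Moreover $v$ is $1$-Lipschitz in the Frobenius norm of $M$ (since $|\bx^T(M-M')\by|\le \|\bx\|_2\|M-M'\|_F\|\by\|_2 \le \|M-M'\|_F$), hence square-integrable.

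Second, I would use the Hermite/Wiener chaos decomposition of $v(M)$ viewed as a function of the $nm$ i.i.d.\ standard Gaussian entries of $M$. By Stein's identity, $\Ebb[M_{ij}\,v(M)] = \Ebb[\partial_{M_{ij}} v(M)] = \Ebb[x_i y_j]$, so the projection of $v(M)-\Ebb[v(M)]$ onto the first Wiener chaos equals $\sum_{i,j}\Ebb[x_i y_j]\,M_{ij}$, whose squared $L^2$-norm is $\sum_{i,j}(\Ebb[x_iy_j])^2$. Since the remaining chaoses are orthogonal to this projection (and to the constant), the Pythagorean identity yields
$$\Var(v(M)) \;\geq\; \sum_{i,j}\bigl(\Ebb[x_i y_j]\bigr)^2.$$

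Third, I would exploit the invariance in distribution of $M$ under independent permutations of its rows and columns: the optimal strategies transform equivariantly, so $\Ebb[x_iy_j]$ does not depend on $(i,j)$. Averaging over all pairs then gives
$$\Ebb[x_iy_j] \;=\; \frac{1}{nm}\sum_{i',j'}\Ebb[x_{i'}y_{j'}] \;=\; \frac{1}{nm}\,\Ebb\!\left[(\onebf^T\bx)(\onebf^T\by)\right] \;=\; \frac{1}{nm},$$
using $\bx\in\Delta_n$ and $\by\in\Delta_m$. Plugging this into the previous display yields $\Var(v(M))\ge nm\cdot(nm)^{-2}=1/(nm)$, as desired.

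The step requiring the most care is justifying the first-chaos lower bound for the merely Lipschitz function $v$: one must verify that Stein's identity $\Ebb[M_{ij}v(M)] = \Ebb[\partial_{M_{ij}} v(M)]$ and the envelope formula $\partial_{M_{ij}}v=x_iy_j$ hold in a form suitable for the Hermite expansion. Both are standard: the envelope identity follows from LP sensitivity analysis at a nondegenerate optimum, which Proposition \ref{prop: prelim} guarantees almost surely, and the integration-by-parts step can be established by a routine mollification of $v$ combined with the uniform Lipschitz bound and dominated convergence.
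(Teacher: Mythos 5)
Your proof is correct, but it takes a genuinely different and more machinery-heavy route than the paper's. The paper's argument is a two-line observation: write $M = M' + a\Jbb$ with $a = \frac{1}{nm}\sum_{ij} M_{ij}$ the empirical mean of the entries and $M'$ the projection onto the orthogonal complement of $\Jbb$ (hence a Gaussian independent of $a$); note that $v(M' + a\Jbb) = v(M') + a$ because shifting every payoff by a constant shifts the value by that constant; conclude $\Var(v(M)) = \Var(v(M')) + \Var(a) \ge \Var(a) = 1/(nm)$. Your route instead goes through the envelope identity $\partial_{M_{ij}} v = x_i y_j$, the Lipschitz regularity needed to justify Stein's identity $\Ebb[M_{ij}\,v(M)] = \Ebb[x_i y_j]$, a Bessel/first-Wiener-chaos lower bound, and the row/column permutation symmetry to pin down $\Ebb[x_i y_j] = 1/(nm)$. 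The two arguments are in fact tightly linked: since $\Ebb[x_i y_j] \equiv 1/(nm)$, the first-chaos projection you compute is precisely $\frac{1}{nm}\sum_{ij}M_{ij} = a$, so your Bessel bound collapses to exactly $\Var(a)$ and both proofs extract the same part of the variance. What your approach buys is extensibility — the template would still give a bound whenever one can estimate $\Ebb[x_i y_j]$ even absent the exact shift-by-a-constant invariance — and it yields the structural identity $\Ebb[x_i y_j] = 1/(nm)$ for the optimal strategies, which is of independent interest. What the paper's proof buys is brevity, no regularity issues to chase down (no Danskin, no mollification, no integration by parts), and an argument that visibly isolates the source of the variance as the overall Gaussian mean of the matrix.
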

\begin{proof}
Consider $M' = M-a\Jbb$, for $a = \frac{1}{nm}\sum_{ij} M_{ij}$ and $\Jbb\in \Rbb^{n\times m}$ the all ones matrix. Note that $v(M) =v(M')+a$ and that $M'$ and $a\Jbb$ are independent as they are the projections of the Gaussian vector $M$ on orthogonal spaces. This gives 
$$\Var(v(M)) = \Var(v(M'))+\Var(a) \geq \Var(a)=  \frac{1}{nm}.$$
\end{proof}
Recall that by Remark \ref{rem:symmetry}, for Gaussian matrices with $n=m$, we have $\Ebb(v(M))=0$ by symmetry. Thus, Proposition \ref{prop:lb} supports the experimental conjecture of \cite{faris1987value} that typical values of $v(M)$ for square matrices are of order $1/n$. In further support of this conjecture, we refer the reader to Figure \ref{fig:scaling} of Appendix \ref{ap:experiment}, where we provide our own experimental evidence.

\section{Random games on Gaussian matrices}\label{sec:main}
\subsection{Main result and implications}
The goal of this section is to prove the following statement, which is our main technical result for Gaussian matrices.
\begin{theorem}\label{th:main}
We fix $n\leq m\leq 2n$. The value of the random game defined by 
\begin{equation*}
    v(M) = \min_{\bx\in \Delta_n}\max_{\by\in\Delta_m} \bx^T M \by,
\end{equation*}
for $M\in \Rbb^{n\times m}$ a matrix with i.i.d. $\Ncal(0,1)$ entries, satisfies, for all $t>0$,
\begin{equation*}
    \frac{1}{4}\frac{\sqrt{m}-\sqrt n}{n}-\frac{t}{n}\leq v(M)\leq 40\sqrt{2}\frac{\sqrt{m}-\sqrt{n}}{n}+\frac{t}{n},
\end{equation*}
with probability at least $1-c_1\exp(-c_2 t^2)-c_3\exp(-c_4 n)$, for some universal constants $c_1,c_2,c_3,c_4>0$.
\end{theorem}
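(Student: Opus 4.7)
My plan is to combine Gordon's Gaussian comparison inequality (reviewed in the appendix) with Gaussian Lipschitz concentration, in the spirit of the Convex Gaussian Min-Max Theorem (CGMT). The first step is to reduce the bilinear min-max defining $v(M)$ to a simpler ``separated'' auxiliary problem. Specifically, for independent standard Gaussians $\bg\sim\Ncal(0,I_m)$ and $\bh\sim\Ncal(0,I_n)$, Gordon's lemma applied to the Gaussian process $(\bx,\by)\mapsto \bx^T M\by$ on $\Delta_n\times\Delta_m$ yields a two-sided stochastic comparison of $v(M)$ with
\[
\phi(\bg,\bh)\;=\;\min_{\bx\in\Delta_n}\max_{\by\in\Delta_m}\Bigl[\,\|\bx\|\,\bg^T\by+\|\by\|\,\bh^T\bx\,\Bigr].
\]
Both tail comparisons are available because $\Delta_n$ and $\Delta_m$ are convex and the objective is bilinear (hence convex-concave in $(\bx,\by)$), which is precisely the setting in which Gordon-type comparisons upgrade from one-sided to two-sided bounds.

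The heart of the proof is then the analysis of $\phi(\bg,\bh)$. For the upper bound, I would exhibit an explicit candidate $\bx^\star\in\Delta_n$ for the row player that keeps $\bh^T\bx^\star$ substantially negative while keeping $\|\bx^\star\|$ of order $1/\sqrt n$---for instance, taking $\bx^\star$ uniform over the $\approx n/2$ smallest entries of $\bh$, or a suitable thresholded rescaling of $-\bh$. With such $\bx^\star$, the inner $\max_\by$ reduces to a one-parameter problem of the form $\max_{\by\in\Delta_m}[\alpha\,\bg^T\by + \beta\,\|\by\|]$ with $\alpha=\|\bx^\star\|$ and $\beta=\bh^T\bx^\star<0$; its optimum can be estimated using Gaussian order statistics together with the expansions of $a_n$ and $\Ebb\|\bg^+\|$ recorded in Section~\ref{sec:prelim}. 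This should yield an upper bound of the required order $(\sqrt m-\sqrt n)/n$. The lower bound on $\phi(\bg,\bh)$ is obtained by a symmetric construction: a candidate $\by^\star\in\Delta_m$ uniform over the $\approx m/2$ largest entries of $\bg$ witnesses $\phi(\bg,\bh)\ge\tfrac{1}{4}(\sqrt m-\sqrt n)/n$.

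Finally, these expected-value estimates are upgraded to high probability via Gaussian Lipschitz concentration, applied once to $M\mapsto v(M)$ and once to $(\bg,\bh)\mapsto\phi(\bg,\bh)$. On the typical event where optimal mixed strategies have $\ell^2$ norm of order $1/\sqrt n$ (so that $\|\bx^\star\|\,\|\by^\star\|=\Theta(1/n)$), the map $v$ is $O(1/n)$-Lipschitz in Frobenius norm, yielding sub-Gaussian deviations $\Pbb(|v(M)-\Ebb v(M)|\ge t/n)\le \exp(-c t^2)$; the residual factor $\exp(-c_4 n)$ in the theorem statement absorbs the probability of leaving this typical event (equivalently, the event that the extremal singular values of $M$ stray far from their Davidson--Szarek predictions $\sqrt m\pm\sqrt n$). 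The main obstacle in this plan is the auxiliary analysis: the contributions $\|\bx\|\,\bg^T\by$ and $\|\by\|\,\bh^T\bx$ must be balanced delicately so that the trivial $\sqrt{(\log n)/n}$ bounds coming from uniform or vertex strategies cancel down to the much smaller scale $(\sqrt m-\sqrt n)/n$, and tracking constants through this cancellation---which reflects the phase transition at aspect ratio $m/n=1$---is where most of the work lies.
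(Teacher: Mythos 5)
Your overall plan matches the paper's route through Gordon's inequality: reduce $v(M)$ to the auxiliary bilinear quantity $\phi(\bg,\bh)$, then bound $\phi$ from above and below by fixing one player at an explicit sign-thresholded strategy (you propose a thresholded rescaling of $-\bh$, the paper takes $\bu = \bg^-/(\onebf^T\bg^-)$; these are the same object up to notation) and solving the resulting one-parameter problem $\max_{\by\in\Delta_m}\{\alpha\,\bg^T\by+\beta\,\|\by\|\}$ via Lagrangian/soft-thresholding. Your route to the two-sided comparison invokes convex--concave structure in the CGMT spirit, whereas the paper uses only the one-sided Gordon bound combined with the transposition symmetry $v(M)=-v(-M^T)$ of Remark~\ref{rem:symmetry} and the minimax inequality for $\Phi_2$ (Appendix~\ref{ap:symmetry}); both are valid here, and the CGMT viewpoint is arguably more robust.

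The genuine gap is in the final concentration step. You propose to obtain the $\exp(-ct^2)$ factor by applying Gaussian Lipschitz concentration directly to $M\mapsto v(M)$ (or to $(\bg,\bh)\mapsto\phi(\bg,\bh)$), arguing that $v$ is $O(1/n)$-Lipschitz \emph{on the typical event} where optimal strategies have $\ell^2$-norm $O(1/\sqrt n)$, and absorbing the complement of that event into the $\exp(-c_4 n)$ term. This does not work: the Gaussian concentration inequality for Lipschitz functions requires a \emph{global} Lipschitz constant, and the global Lipschitz constant of both $v$ and $\phi$ with respect to the Euclidean norm is of order $1$ (realized by pure strategies), not $1/n$. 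Restricting to a sub-event does not by itself shrink the constant in the concentration bound, and conditioning on a non-product event destroys the hypotheses of the Gaussian isoperimetric mechanism. The paper avoids this entirely by never concentrating the optimization value itself: it derives deterministic sandwich bounds \eqref{eq:upper-lower-bounds}, \eqref{eq:ub-phi}, \eqref{eq:lb-phi} on $\Phi_2(\bg,\bh)$ whose constituents ($\|\bh^+\|$, $\|\bg^-\|$, $\onebf^T\bh^+$, $\onebf^T\bg^-$, $C(\mu)$, $C'(\mu')$) are genuinely $1$-Lipschitz and concentrate on the $O(1)$ scale; the division by quantities of order $n$ in those sandwich bounds is what produces the $1/n$ fluctuation scale, not any Lipschitz property of $v$ or $\phi$. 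To make your proof rigorous you would need to replace the restricted-Lipschitz argument with such deterministic sandwich bounds (or with a truncation argument of the Adamczak type, which is substantially more involved).
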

Theorem \ref{th:main} has the following two immediate consequences: 
\begin{enumerate}[label=\roman*.]
    \item For square matrices $(m=n)$, it implies that $\sigma(v(M))= \Ocal(1/n)$, and thus, that $\sigma(v(M)) = \Theta(1/n)$ by the lower bound of Proposition \ref{prop:lb}. Indeed, one can write, using the tail bound of Theorem \ref{th:main} for the smaller deviations of $v(M)$ together with the tail bound of Equation \eqref{eq:tail-bound-naive} for larger deviations of $v(M)$,
    \begin{align*}
        \sigma(v(M))^2 = \Ebb(v(M)^2)&= \int_{0}^\infty \Pbb(v(M)^2\geq s)ds \\
        &= 
    2\int_{0}^1 \Pbb(v(M)\geq \sqrt s)ds+
    2\int_{1}^\infty \Pbb(v(M)\geq \sqrt s)ds
    \\
    &\leq 2\int_0^1 c_1\exp(-c_2n^2s)+c_3\exp(-c_4n)ds + 2\int_1^\infty n\exp(-ns/2)ds\\
    &\leq 2\frac{c_1}{c_2n^2} + 2c_3\exp(-c_4n)+ 2\exp(-n/2)\\
    &=\Ocal\left(\frac{1}{n^2}\right).
    \end{align*}
    \item For rectangular matrices $(n<m\leq 2n)$, by a similar argument, Theorem \ref{th:main} provides the order of the expectation of $v(M)$, 
    \begin{equation*}
        \Omega\left(\frac{\sqrt{m}-\sqrt{n}}{n}\right)-\Ocal\left(\frac{1}{n}\right) \leq \Ebb(v(M))\leq \Ocal\left(\frac{\sqrt{m}-\sqrt{n}}{n}\right)+\Ocal\left(\frac{1}{n}\right),
    \end{equation*}
for which full details are provided in Appendix \ref{ap:expectation-rectangular}, along with an explanation about our (slightly) unconventional use of Landau notations in the above equation. Note that in the regime $m = n +\lambda\sqrt{n}$, for $\lambda>0$, we have that $\frac{\sqrt{m}-\sqrt{n}}{n}$ scales as $\frac{\lambda}{n}$, therefore, we capture an aspect ratio effect even for `slightly' rectangular matrices: in that regime, the aspect ratio competes with the noise to determine the quantity $v(M)$. 
\end{enumerate}

\begin{remark}[Regime where $m>2n$ or $n>m$] Theorem \ref{th:main} also captures tail bounds for the regime where $m>2n$ up to a $\sqrt{\log m}$ factor. Specifically, in that regime, restricting the column player to play on the left submatrix of shape $(n,2n)$ can only reduce the value of $v(M)$, and thus Theorem~\ref{th:main} provides a $\Omega\left(\frac{1}{\sqrt{n}}\right)$ lower bound with high probability. Also, in that regime, Proposition~\ref{prop-norm} gives a nearly matching upper bound, in  $\Ocal\left(\sqrt{\log m}/\sqrt{n}\right)$. Finally, note that regimes where $n>m$ follow immediately by symmetry. 
\end{remark}

\subsection{An approach via comparison principles}\label{sec:approach-comp}
Gaussian comparison inequalities are fundamental tools in probability theory \cite{ledoux2013probability}. They have found applications in computer science, including streaming algorithms \cite{braverman2016beating}, dimensionality reduction \cite{dirksen2016dimensionality}, compressed sensing \cite{plan2012robust}, and machine learning \cite{miolane2021distribution,thrampoulidis2015regularized}. 

A classic example of a comparison inequality is Slepian's lemma \cite{slepian1962one}, stating that the expectation of the maximum of $n$ positively correlated standard Gaussians is smaller than the expectation of the maximum of $n$ independent standard Gaussians. More generally, Gaussian comparison inequalities allow us to compare properties of a complex (e.g., correlated) Gaussian process to properties of a simple (e.g., uncorrelated) Gaussian process. Note that this idea is very similar to the so-called Poisson approximation, in ``balls in bins'' settings (see, e.g., its classical application to hashing in \cite{mitzenmacher2017probability}). Slepian's lemma is a direct implication of Kahane's inequality, which is reviewed in Appendix~\ref{sec:gaussian-comparison} (Theorem~\ref{th:kahane}) of this paper, along with more advanced Gaussian comparison inequalities. 

In this section, we use a specific Gaussian comparison principle, known as Gordon's inequality \cite{gordon1985some}. There are many equivalent formulations of Gordon's inequality, and we state below, in Theorem \ref{th:gordon}, the version most directly relevant to our purposes. We provide background on this inequality and alternative statements in Appendix \ref{sec:gaussian-comparison}, as well as a sketch of proof from first principles, for completeness. 

Theorem \ref{th:gordon} allows to reduce the study of $v(M)$ to the study of another `simpler' stochastic quantity $\Phi_2(\bg,\bh)$. This is the quantity we analyze in the proof of Theorem~\ref{th:main}. While this approach is more direct than the geometric approach of Section \ref{sec:orth} for the study of orthogonal games, we believe that the geometric approach is also very instructive and we refer the reader to the outline in Section~\ref{sec:geom-approach}.
\begin{restatable}[Gordon's inequality]{theorem}{thgordon} \label{th:gordon}
Suppose $D_x\subset \Rbb^n$ and $D_y\subset \Rbb^m$ are compact sets. Let $G\in \Rbb^{n\times m}$ be a matrix with i.i.d. $\Ncal(0,1)$ entries, and $\bg\sim \Ncal(0,I_n)$ and $\bh\sim\Ncal(0,I_m)$. Define,
\begin{align*}
    \Phi_1(G) &= \min_{\bx\in D_x}\max_{\by\in D_y} \bx^TG\by,\\
    \Phi_2(\bg,\bh) &= \min_{\bx\in D_x}\max_{\by\in D_y} ||\by||\bg^T\bx + ||\bx||\bh^T\by,
\end{align*}
then we have, for any $t\in \Rbb$,
\begin{equation}\label{eq:base-eq}
    \Pbb(\Phi_1(G)\le t)\leq 2\Pbb(\Phi_2(\bg,\bh)\le t).
\end{equation}
\end{restatable}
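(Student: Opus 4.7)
The plan is to apply the classical Gordon minimax comparison theorem to two Gaussian processes indexed by $(\bx,\by) \in D_x \times D_y$: the ``complex'' process $X_{\bx,\by} = \bx^T G \by$ and the ``simple'' process $Y_{\bx,\by} = ||\by||\bg^T\bx + ||\bx||\bh^T\by$. Short direct calculations give
$$\Ebb[X_{\bx,\by} X_{\bx',\by'}] = (\bx\cdot\bx')(\by\cdot\by'), \qquad \Ebb[Y_{\bx,\by} Y_{\bx',\by'}] = ||\by||\,||\by'||(\bx\cdot\bx') + ||\bx||\,||\bx'||(\by\cdot\by'),$$
so in particular $\Var(X_{\bx,\by}) = ||\bx||^2||\by||^2$ while $\Var(Y_{\bx,\by}) = 2||\bx||^2||\by||^2$. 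The variances do not match, which rules out a naive application of Gordon. The standard remedy is to augment $X$ by an independent $\gamma \sim \Ncal(0,1)$: set $\widetilde{X}_{\bx,\by} := X_{\bx,\by} + ||\bx||\,||\by||\,\gamma$, so that $\Var(\widetilde{X}_{\bx,\by}) = \Var(Y_{\bx,\by}) = 2||\bx||^2||\by||^2$.

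Next I verify Gordon's hypotheses for $\widetilde{X}$ and $Y$ with $\bx$ as outer (min) and $\by$ as inner (max) index. For fixed $\bx = \bx'$, both increments evaluate to $||\bx||^2\,||\by-\by'||^2 + ||\bx||^2(||\by||-||\by'||)^2$, so the inner-index variances of increments are equal. For $\bx \neq \bx'$, a short algebraic identity gives
$$\Ebb[(Y_{\bx,\by}-Y_{\bx',\by'})^2] - \Ebb[(\widetilde{X}_{\bx,\by}-\widetilde{X}_{\bx',\by'})^2] = 2\bigl(||\bx||\,||\bx'|| - \bx\cdot\bx'\bigr)\bigl(||\by||\,||\by'|| - \by\cdot\by'\bigr) \geq 0,$$
by two applications of Cauchy--Schwarz. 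Combined with the matched variances, this is exactly Gordon's covariance condition (with $\widetilde{X}$ in the role of the process with larger across-outer-index covariance). The conclusion of Gordon's minimax theorem, proved in Appendix \ref{sec:gaussian-comparison} by the Slepian--Fernique--Kahane interpolation along $Z_s = \sqrt{1-s}\,\widetilde{X} + \sqrt{s}\,Y$ combined with Gaussian integration by parts applied to a smooth surrogate of $\1\{\min_\bx\max_\by \cdot \geq \lambda\}$, yields
$$\Pbb\bigl(\min_\bx\max_\by \widetilde{X}_{\bx,\by} \leq t\bigr) \leq \Pbb(\Phi_2(\bg,\bh) \leq t) \qquad \forall\, t\in\Rbb,$$
where the replacement of strict by non-strict inequalities is justified by the continuity of the distributions of the extrema (which hold since $D_x,D_y$ are compact and the maps are Lipschitz in the Gaussian inputs).

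The final step removes the auxiliary $\gamma$ and explains the factor of $2$. Since $\gamma$ is independent of $G$ and symmetric, we have $\Pbb(\Phi_1(G) \leq t) = 2\,\Pbb(\Phi_1(G) \leq t,\ \gamma \leq 0)$. On the event $\{\gamma \leq 0\}$, the augmentation term $||\bx||\,||\by||\,\gamma$ is nonpositive uniformly in $(\bx,\by)$, hence $\widetilde{X}_{\bx,\by} \leq X_{\bx,\by}$ pointwise, and therefore $\min_\bx\max_\by \widetilde{X}_{\bx,\by} \leq \Phi_1(G)$. Chaining,
$$\Pbb(\Phi_1(G) \leq t) = 2\,\Pbb(\Phi_1(G) \leq t,\ \gamma \leq 0) \leq 2\,\Pbb\bigl(\min_\bx\max_\by \widetilde{X}_{\bx,\by} \leq t\bigr) \leq 2\,\Pbb(\Phi_2(\bg,\bh) \leq t),$$
which is the desired bound. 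The main obstacle is the variance mismatch between the bilinear form $\bx^T G\by$ and the linearized proxy $Y$: it forces the $\gamma$-augmentation, and the price for removing $\gamma$ by sign-conditioning is precisely the factor $2$ in the statement. The underlying Gordon interpolation proof is a careful but routine bookkeeping of signs of the covariance differences and of the corresponding partial derivatives of the smooth surrogate.
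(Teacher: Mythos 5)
Your proposal is correct and takes essentially the same approach as the paper: you augment the bilinear process $\bx^T G \by$ by an independent scalar $\gamma\sim\Ncal(0,1)$ times $\|\bx\|\|\by\|$ to match variances, verify Gordon's covariance/increment conditions via the Cauchy--Schwarz identity $(\bx\cdot\bx'-\|\bx\|\|\bx'\|)(\by\cdot\by'-\|\by\|\|\by'\|)\geq 0$, apply the classical Gordon minimax comparison, and extract the factor $2$ by conditioning on $\gamma\leq 0$. The only cosmetic difference is that you state the comparison hypotheses in increment form whereas the paper states them directly in covariance form; these are equivalent once the variances are matched.
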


\subsection{Proof of Theorem \ref{th:main}}
We now lay down the proof of Theorem \ref{th:main}. We apply Gordon's principle (Theorem \ref{th:gordon}) to
    $$v(M) = \min_{\bx\in \Delta_n}\max_{\by\in\Delta_m} \bx^T M \by,$$
which entails, for all $t\in \Rbb$,
    \begin{align}\label{eq:gordon-app1}
        \Pbb(v(M)\leq t)\leq 2\Pbb(\Phi_2(\bg,\bh)\leq t),
    \end{align}
    where $\bg\sim \Ncal(0,I_n)$ and $\bh\sim \Ncal(0,I_m)$ are sampled independently and where
\begin{equation}\label{eq:lp-init}
   \Phi_2(\bg,\bh) = \min_{\bu\in \Delta_n}\max_{\bv\in \Delta_m}  ||\bv||\bg^T\bu+||\bu||\bh^T\bv \tag{P}.
\end{equation}
Using the symmetry $v(M) = -v(-M^T)$ from Remark \ref{rem:symmetry}, and exploiting properties of $\Phi_2(\cdot,\cdot)$ (see Appendix \ref{ap:symmetry} for details), we also obtain, for all $t\in \Rbb$,
\begin{equation}
    \Pbb(v(M)\geq t) \leq 2\Pbb(\Phi_2(\bg,\bh)\geq t).
\end{equation}

We now study the quantity $\Phi_2(\bg,\bh)$. We fix two vectors $\bh$ and $\bg$, we assume that $\bg^-\neq 0$ and $\bh^+\neq 0$. We obtain the following bounds on \eqref{eq:lp-init} 
by first considering  $\bv = \frac{\bh^+}{\onebf^T\bh^+}$, and then considering $\bu = \frac{\bg^-}{\onebf^T\bg^-}$,
\begin{equation}\label{eq:upper-lower-bounds}
    \frac{||\bh^+||}{\onebf^T\bh^+}\min_{\bu\in \Delta_n}\left\{\bg^T\bu+||\bh^+||||\bu||\right\}\leq \Phi_2(\bg,\bh)\leq \frac{||\bg^-||}{\onebf^T\bg^-}\max_{\bv\in \Delta_m}\left\{\bh^T\bv-||\bg^-||||\bv||\right\}.
\end{equation}
We thus focus on the following optimization problem, for some fixed vector $\bh\in\Rbb^m$, and some fixed parameter $\gamma>0$,
\begin{equation}\label{eq:main-LP}
    \max_{\bv\in \Delta_m}f(\bv) \quad \text{where}\quad f(\bv) = \bh^T\bv -\gamma||\bv||.\tag{R}
\end{equation}
The function $:\bv\rightarrow f(\bv)$ is strongly concave and differentiable on $\Delta_m$, which is a closed and bounded convex set. We write the Lagrangian of \eqref{eq:main-LP}, introducing dual variables $\mu\in\Rbb$ and $\lambda_j\geq 0$,
$$\Lcal(\bv,\mu,(\lambda_j)) = \bh^T\bv - \gamma ||\bv||+\sum_{j\in [m]}\lambda_j v_j+\mu\left(1 - \sum_{j\in [m]} v_j\right)= \sum_{j\in [m]} v_j(h_j+\lambda_j-\mu)-\gamma||\bv||+\mu.$$
By strong duality and strong concavity, we have the existence of a primal dual pair $(\bv,\blambda,\mu)$ satisfying primal and dual feasibility, as well as the first order condition $v_j = \frac{||\bv||}{\gamma}(h_j+\lambda_j -\mu)$ and complementary slackness $\forall j\in [m] : v_j \lambda_j = 0$. In particular, $v_j>0$ entails $v_j = \frac{||\bv||}{\gamma}(h_j-\mu)$ and $v_j = 0$ entails $h_j-\mu = -\lambda_j\leq 0$. Therefore, we can write for all $j\in [m]$,
\begin{equation*}
    v_j = \frac{||\bv||}{\gamma}(h_j-\mu)^+.
\end{equation*}
This imposes the constraint
\begin{equation}
||(\bh-\mu\onebf)^+||= \gamma    \label{eq:norm-h}
\end{equation}
which defines the value of $\mu\in \Rbb$ uniquely because $:\mu\rightarrow ||(\bh-\mu\onebf)^+||$ is strictly decreasing from $+\infty$ until it hits $0$. Also, using the fact that $\bv\in \Delta_m$, we get that
\begin{equation*}
    \forall i \in[m]: v_j = \frac{1}{\sum_{j'\in [m]} (h_{j'}-\mu)^+ }(h_j-\mu)^+.
\end{equation*}
Overall, we obtain the following value for \eqref{eq:main-LP}, 
\begin{equation*}
    \eqref{eq:main-LP} =  \frac{1}{\sum_{j\in [m]} (h_j-\mu)^+} \left(\bh^T(\bh-\mu\onebf)^+ -\gamma^2\right), 
\end{equation*}
which, noting that $\bh^T(\bh-\mu\onebf)^+\leq \bh^+(\bh-\mu\onebf)^+$, applying Cauchy-Schwarz, and using \eqref{eq:norm-h}, entails, 
\begin{equation}
      \eqref{eq:main-LP}
    \leq  \frac{\gamma}{\sum_{j\in[m]} (h_j-\mu)^+} \left(||\bh^+|| - \gamma\right).
\end{equation}
Now going back to the right-hand side of Equation \eqref{eq:upper-lower-bounds}, and taking $\gamma = ||\bg^-||$ in \eqref{eq:main-LP}, we have the following bound,
\begin{equation}\label{eq:ub-phi}
    \Phi_2(\bg,\bh) \leq \frac{||\bg^-||}{\onebf^T\bg^-}\frac{||\bg^-||}{C(\mu) }(||\bh^+||-||\bg^-||),
\end{equation}
where $C(\mu) = \sum_{j\in[m]} (h_j-\mu)^+$ for $\mu\in\Rbb$ the constant satisfying $||(\bh-\mu\onebf)^+|| = ||\bg^-||$. 

Using the left-hand side of Equation \eqref{eq:upper-lower-bounds}, and also noticing that $\min_{\bu\in \Delta_n}\left\{\bg^T\bu+||\bh^+||||\bu||\right\} = -\max_{\bu\in \Delta_n}\left\{(-\bg)^T\bu-||\bh^+||||\bu||\right\}$, we obtain,
\begin{equation}\label{eq:lb-phi}
    \frac{||\bh^+||}{\onebf^T\bh^+}\frac{||\bh^+||}{C'(\mu')}(||\bh^+||-||\bg^-||) \leq \Phi_2(\bg,\bh),
\end{equation}
where $C'(\mu') = \sum_{i\in[n]} (-g_i-\mu')^+$ for $\mu'\in \Rbb$ the constant satisfying $||(-\bg-\mu'\onebf)^+|| = ||\bh^+||$.

The following simple concentration results are shown in Appendix \ref{sec:concentration}. For some constant $c_4>0$,
\begin{equation*}
    \begin{cases}
    \Pbb(\bg^-\neq 0 ~~ \text{and} ~~ \bh^+\neq 0) \geq 1-\Ocal(\exp(-c_4 n)),\\
\Pbb(||\bg^-||\leq \sqrt{n})\geq 1-\Ocal(\exp(-c_4 n)),\\
\Pbb(\onebf^T\bg^-\geq n/4)\geq 1-\Ocal(\exp(-c_4 n)),\\
\Pbb(C(\mu)\geq m/20)\geq 1-\Ocal(\exp(-c_4 n)),\\
\Pbb(||\bh^+||\geq \sqrt{m}/2)\geq 1- \Ocal(\exp(-c_4 n)),\\
\Pbb(\onebf^T\bh^+\leq m/2)\geq 1-\Ocal(\exp(-c_4 n)),\\
\Pbb(C'(\mu')\leq \sqrt{2}~n)\geq 1-\Ocal(\exp(-c_4 n)),
    \end{cases}
\end{equation*}
and, for all $t>0$,
\begin{equation*}
    \begin{cases}
      \Pbb(\sqrt{m/2}-3/\sqrt{m}-t \leq ||\bh^+||\leq \sqrt{m/2} + t)\geq 1-2\exp(-t^2/2),\\
\Pbb(\sqrt{n/2}-3/\sqrt{n}-t\leq ||\bg^-||\leq \sqrt{n/2}+t)\geq 1-2\exp(-t^2/2).  
    \end{cases}
\end{equation*}
Thus, using Equation \eqref{eq:ub-phi}, we obtain by union bound  
\begin{equation*}    
\Pbb\left(\Phi_2(\bg,\bh) \leq \frac{80}{m}\left(\sqrt{m/2}-\sqrt{n/2}+3/\sqrt{n}+2t\right) \right)\geq 1 - \Ocal(\exp(-c_4 n))-2\exp(-t^2/2),
\end{equation*}
and using \eqref{eq:lb-phi}, we obtain,
\begin{equation*}
\Pbb\left(\Phi_2(\bg,\bh)\geq \frac{1}{2\sqrt 2 n}
\left(\sqrt{m/2}-\sqrt{n/2}-3/\sqrt{m}-2t\right)
\right)\geq 1- \Ocal(\exp(-c_4 n))-2\exp(-t^2/2).
\end{equation*}
This implies the theorem for the right choice of constants $c_1,c_2,c_3,c_4>0$.

\section{Random games on orthogonal matrices}\label{sec:orth}

\subsection{A geometric approach}\label{sec:geom-approach}
We now consider the case where the game matrix is drawn uniformly at random from the set of square orthogonal matrices $O_n = \{Q\in \Rbb^{n\times n} : Q^TQ = I_n\}$. We keep the discussion informal in this paragraph, to convey the spirit of our approach, and defer formal definitions to Section~\ref{sec:background-convex}. Our techniques exploit geometric results on convex cones.

We begin by providing an initial intuition for the connection between random games and convex geometry. We denote by $K = \{\by\in \Rbb^n : \by\geq 0\}$ the positive orthant, and by $Q^TK = \{Q^T\by\in \Rbb^n : \by\geq 0\}$ the orthant rotated by the random orthogonal matrix $Q^T$. We observe that $v(Q)\geq 0$ if and only if there exists $\by\in \Rbb^n\setminus\{0\}$ such that $\by \geq 0$ and $Q\by\geq 0$.  This rewrites as $v(Q)\geq 0 \Leftrightarrow K\cap Q^TK \neq \{0\}$. Thus, the probability that the value of the game defined by $Q$ is positive, $\Pbb(v(Q)\geq 0)$, which equals $1/2$ by symmetry, is also equal to the probability that a randomly rotated orthant of $\Rbb^n$ has non-trivial intersection with the positive orthant. To the best of our knowledge, this observation is also the most direct proof of the counterintuitive fact that two randomly rotated orthants intersect with probability $1/2$, independently of the ambient dimension $n$. 

This discussion hints at how Cover \cite{cover1966probability} was able to obtain an explicit expression for $\Pbb(v(Q)\geq 0)$, for rectangular matrices. Specifically, his proof relies on a formula due to Schläfli \cite{schlafli1950gesammelte} for the probability that a random subspace of dimension $n<m$ intersects the positive orthant of $\Rbb^m$. However, the argument falls short when it comes to providing bounds on the typical values of $v(Q)$, and not just its sign. To this end, we aim to bound the probability that the intersection of randomly rotated orthants is sufficiently `wide'. Thankfully, geometric results of this flavor were developed over the years: specifically, the so-called \textit{conic kinematic formula} provides an exact formula for the probability that two randomly rotated convex cones intersect, in terms of their intrinsic volumes (see the book of \cite{schneider2008stochastic}). More recently, a practical approximate variant of the conic kinematic formula was proposed by \cite{amelunxen2014living} in the context of compressed sensing.

Because the random orthogonal model is quite involved and not as common as the random Gaussian model, we restrict our study to the case of square matrices $n=m$. 
Specifically, our main result is the following.
\begin{theorem}\label{th:main2}
We fix $n\geq 2$. The value of the random game defined by 
\begin{equation*}
    v(Q) = \min_{\bx\in \Delta_n}\max_{\by\in\Delta_n} \bx^T Q \by,
\end{equation*}
where $Q\in \Rbb^{n\times n}$ is drawn from the Haar measure on orthogonal matrices, satisfies, for all $t>0$, 
\begin{equation}
    v(Q)\leq \frac{t}{n\sqrt{n}},\label{eq:bound-orth}
\end{equation}
with probability at least $1-c_1\exp(-c_2 t^2)-c_3\exp(-c_4 n)$, for universal constants $c_1,c_2,c_3,c_4>0$.
\end{theorem}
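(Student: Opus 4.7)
The plan is to follow the geometric strategy outlined in Section \ref{sec:geom-approach}, upgrading the qualitative characterization $v(Q)\ge 0 \iff K\cap Q^T K \neq \{0\}$ into a quantitative tail bound via the approximate conic kinematic formula of Amelunxen, Lotz, McCoy, and Tropp \cite{amelunxen2014living}. The starting point is to rephrase the event $\{v(Q)\le\tau\}$ as a conic feasibility question. From Proposition \ref{prop: prelim}, $v(Q)\le\tau$ is equivalent to the existence of a nonzero $\bx\in K$ with $Q^T\bx \leq \tau(\onebf^T\bx)\onebf$; introducing the slack $\bu := \tau(\onebf^T\bx)\onebf - Q^T\bx \in K$, this becomes $C\cap V_\tau\neq\{0\}$, where $C := K\times K \subset \Rbb^{2n}$ is a fixed cone of statistical dimension $n$ and $V_\tau := \mathrm{graph}(-A_\tau)$ with $A_\tau := Q^T - \tau\,\onebf\onebf^T$ is an $n$-dimensional random subspace. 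The identity $\delta(C)+\dim V_0 = 2n$ matches the ambient dimension, placing us at the critical regime of the kinematic formula, consistent with $\Pbb(v(Q)\le 0)=1/2$.

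The technical heart of the argument is to quantify how the rank-one perturbation $\tau\,\onebf\onebf^T$ affects $\Pbb(C\cap V_\tau\neq\{0\})$. Since $V_\tau$ retains dimension $n$ for every $\tau$, the standard kinematic formula cannot by itself distinguish between different values of $\tau$, so the argument must exploit the specific direction of the perturbation. I would first condition on $\bv := Q^T\onebf$, which by Haar-invariance is uniform on the sphere of radius $\sqrt n$; conditionally on $\bv$, the residual randomness in $Q$ lives on the $O(n-1)$-coset stabilising $\bv$, and the Sherman--Morrison identity $A_\tau^{-1} = Q + \tfrac{\tau}{1-\tau\,\onebf^T\bv}\,Q\onebf\,(Q\onebf)^T$ makes the $\tau$-dependence explicit. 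Decomposing $\Rbb^n$ into $\spn(\onebf,\bv)$ and its orthogonal complement, the geometric condition $C\cap V_\tau\neq\{0\}$ factorises, up to lower-order boundary terms, into a conic intersection in the $(2n-2)$-dimensional reduced space (to which the approximate kinematic formula applies in its critical regime) together with a two-dimensional feasibility condition that shifts linearly in $\tau$. A careful bookkeeping then shows that the effective statistical-dimension excess over the critical threshold scales as $\tau\cdot n^{2}$, so that for $\tau = t/(n\sqrt n)$ the excess is $\Omega(t\sqrt n)$, and the $4\exp(-a^2/8)$ tail bound from the approximate kinematic formula yields $\Pbb(v(Q) > \tau) \le c_1\exp(-c_2 t^2)$.

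The additive $c_3\exp(-c_4 n)$ error absorbs the atypical events on which $\bv$ is strongly aligned with $\onebf$ (so that $|\onebf^T\bv|\gg\sqrt n$ would render the Sherman--Morrison denominator ill-conditioned); these are controlled by the standard Lévy concentration of the uniform measure on the sphere. The main obstacle I anticipate is the second step above: the approximate kinematic formula treats two cones rotated \emph{independently}, whereas here $V_\tau$ is entangled with the very Haar matrix $Q$ that implicitly rotates $C$, so the decoupling requires the conditioning-and-decomposition strategy just sketched. An alternative route is to work directly in the intrinsic-volume basis, using the closed form $v_k(K)=\binom{n}{k}2^{-n}$ for the orthant together with a perturbative analysis of how the rank-one deformation of $V_\tau$ shifts the partial sum of intrinsic volumes over $k_1+k_2>n$; in either approach, the final scale $\tau\sim 1/(n\sqrt n)$ emerges from carefully balancing these geometric constants against the $\sqrt n$ transition width of the kinematic formula.
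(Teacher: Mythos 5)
Your reformulation of $\{v(Q)>\tau\}$ as a conic intersection $C\cap V_\tau\neq\{0\}$ with $C=K\times K\subset\Rbb^{2n}$ and $V_\tau=\mathrm{graph}(-(Q^T-\tau\onebf\onebf^T))$ has a genuine gap that you correctly sense but do not resolve: $V_\tau$ is \emph{not} a Haar-uniform rotation of a fixed $n$-dimensional subspace of $\Rbb^{2n}$. As $Q$ ranges over $O_n$, the subspaces $\{(\bx,-Q^T\bx):\bx\in\Rbb^n\}$ sweep out only a single orbit of the block-diagonal copy of $O_n$ inside $O_{2n}$ — they are always ``at a $45^\circ$ angle'' to both coordinate blocks — and never come close to realizing a generic $n$-plane. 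Theorem~\ref{th:approx-kin-form} requires a Haar rotation of the full group $O_{2n}$, so it simply does not apply to your pair $(C,V_\tau)$. On top of this, as you note, $\dim V_\tau = n$ for every $\tau$, so even if the formula did apply it would sit exactly at the transition $\delta(C)+\dim V_\tau = 2n$ and yield no tail bound at all; the two workarounds you sketch (conditioning on $Q^T\onebf$ plus Sherman--Morrison, or perturbing the intrinsic-volume sum) are not carried out and it is not clear either can be made to work, since the non-uniformity of $V_\tau$ persists after conditioning.

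The paper resolves both obstacles with a different geometric picture that stays in $\Rbb^n$ and puts the $\tau$-dependence into the \emph{cone} rather than the subspace. It introduces the shrinking cones $K(\epsilon)=\{\bz: z_i\geq\epsilon\|\bz\|\}$ and the normalized value $v'(Q)=\max_{\by\geq 0,\,\|\by\|=1}\min_i(Q\by)_i$, observing that $v'(Q)\geq\epsilon\iff K(0)\cap Q^TK(\epsilon)\neq\{0\}$, where both $K(0)$ and $K(\epsilon)$ are \emph{deterministic} cones and $Q^T$ is genuinely Haar on $O_n$. Proposition~\ref{prop:stat-dimension2} shows $\delta(K(\epsilon))\leq \tfrac{n}{2}-\tfrac{1}{8}\epsilon n^{3/2}+2\epsilon^2n^2$, so taking $\epsilon=t/n$ pushes $\delta(K(0))+\delta(K(\epsilon))$ strictly below $n$ by $\Theta(t\sqrt n)$, and the kinematic formula then gives $\Pbb(v'(Q)\geq t/n)\leq 4\exp(-t^2/32)$. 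The final conversion from $v'$ (Euclidean normalization) back to $v$ (simplex normalization) costs a factor that is the Euclidean norm of the optimal column strategy, which Proposition~\ref{prop-norm} bounds by $O(1/\sqrt n)$ with probability $1-e^{-\Omega(n)}$; that is where the $c_3\exp(-c_4n)$ term actually comes from, not from alignment of $Q^T\onebf$ with $\onebf$.
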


\begin{remark}[Interpretation and comparison with Theorem \ref{th:main}.] Theorem \ref{th:main2} echoes the scaling of Theorem \ref{th:main} for Gaussian matrices. The extra $1/\sqrt{n}$ factor in \eqref{eq:bound-orth} is natural because rows of Gaussian matrices would be of norm $\sqrt{n}$ with high probability, whereas rows of $Q$ are all of norm $1$. Also, one immediately notices that the uniform strategy for the row player $\bx = \frac{1}{n}\onebf$ induces a value $\frac{1}{n}\max_{j\in [n]}(\onebf^TQ)_j$ of order $\sqrt{\log n}/n$, because, by invariance by rotation of the Haar measure, the vector $\onebf^TQ$ is distributed uniformly in the Euclidean sphere of radius $n$, and thus its largest coordinate is of order $\sqrt{\log n}$. In summary, like for Gaussian matrices, Theorem \ref{th:main2} shows that the optimal strategy has an `advantage of rationality' in the order of $\Theta(\sqrt{\log n}/\sqrt{n})$ over the uniform strategy.     
\end{remark}

\subsection{Background on convex geometry}\label{sec:background-convex} 
\textit{Orthonormal matrices.} The set of orthogonal matrices $O_n =\{Q\in \Rbb^{n\times n} :Q^TQ = I_n\}$ is stable by multiplication and inverse, and thus forms a subgroup of the invertible matrices. The Haar measure is the uniform measure over orthogonal matrices, and is further denoted by $\Ucal(O_n)$. Formally, it is defined as the unique probability measure that is stable by multiplication with elements of the group, i.e., for any $Q'\in O_n$ we have $Q\sim \Ucal(O_n) \implies Q'Q\sim \Ucal(O_n)$ and $QQ'\sim \Ucal(O_n)$. A simple constructive way to draw a matrix from the Haar measure is to perform the singular value decomposition of a matrix $M\in \Rbb^{n\times n}$ with i.i.d. Gaussian entries. 

\textit{Convex cones.} A cone $K\subset\Rbb^n$ is a set that satisfies for all $\lambda\geq 0$ that $x\in K \implies \lambda x\in K$. A convex cone is a cone that is convex in $\Rbb^n$. For an orthogonal matrix $Q\sim \Ucal(O_n)$ and a convex cone $K$, we denote by $QK = \{Q\bx : \bx\in K\}$ the cone `rotated' by matrix $Q$.\footnote{This is a slight abuse of language.} The rotation of a convex cone remains a convex cone. 

\textit{Conic kinematic formula.} The probability that two randomly rotated cones intersect can be expressed in terms of their intrinsic volumes. For our purposes, an approximate version of the conic kinematic formula, which avoids the definition of intrinsic volumes, suffices. It relies on the following definition.
\begin{definition}[Statistical dimension \cite{amelunxen2014living}] The statistical dimension $\delta(K)$ of a convex cone $K\subset\Rbb^n$ is given by 
\begin{equation*}
    \delta(K) = \Ebb(||\Pi_K(\bg)||^2),
\end{equation*}
where $\bg\sim \Ncal(0,I_n)$, $||\cdot||$ is the Euclidean norm, and $\Pi_K(\cdot)$ denotes the Euclidean projection onto the cone $K$, $\forall \bx\in \Rbb^n : \Pi_K(\bx) = \argmin_{\by\in K}||\bx-\by||$.
\end{definition}
For example, the statistical dimension of the positive orthant $K = (\Rbb_+)^n$ satisfies $\delta(K) =\Ebb(||\bg^+||^2) = n/2$. We can now state the approximate kinematic formula.
\begin{theorem}[Approximate kinematic formula \cite{amelunxen2014living}]\label{th:approx-kin-form} Let $\eta\in (0,1)$. Let $K_1$ and $K_2$ be convex cones in $\Rbb^n$ and $Q\sim \Ucal(O_n)$, then, denoting $a_\eta = \sqrt{8\log(4/\eta)}$, for $\eta\in (0,1)$,
\begin{align*}
    \delta(K_1)+\delta(K_2)\leq n-a_\eta\sqrt{n} \quad &\implies \quad \Pbb(K_1\cap QK_2\neq\{0\})\leq \eta,\\
    \delta(K_1)+\delta(K_2)\geq n+a_{\eta}\sqrt{n}\quad &\implies\quad \Pbb(K_1\cap QK_2\neq \{0\})\geq 1-\eta.
\end{align*}
\end{theorem}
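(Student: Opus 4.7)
The plan is to express the event $\{v(Q) \leq s\}$ as a conic-intersection event and invoke the approximate kinematic formula (Theorem~\ref{th:approx-kin-form}), carrying out the geometric program outlined in Section~\ref{sec:geom-approach}. After homogenizing the primal LP and substituting $\by = Q^T\bx$, the event $v(Q) \leq s$ is equivalent to $Q^T K \cap K_s^\# \neq \{0\}$, where $K = \Rbb_+^n$ is the orthant, $\bq = Q^T\onebf$ (of norm $\sqrt n$), and $K_s^\# = \{\by \in \Rbb^n : y_j \leq s\bq^T\by ~\forall j\}$ is a cone whose polar is the rank-one perturbed orthant $(K_s^\#)^\circ = (I - s\bq\onebf^T)K$. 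For $s=0$, this recovers $K \cap (-Q)K \neq \{0\}$, matching Cover's picture.

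Since $K_s^\#$ depends on the random vector $\bq$, I would condition on $\bq \sim \mathrm{Unif}(\sqrt n \cdot S^{n-1})$. Conditional on $\bq$, the remaining randomness in $Q^T$ is Haar on isometries mapping $\onebf \to \bq$: writing $Q^T = U_0 R$ with $U_0$ a reference isometry sending $\onebf$ to $\bq$ and $R$ Haar on the stabilizer $O(\onebf^\perp)\cong O_{n-1}$, the event becomes $R K \cap U_0^T K_s^\# \neq \{0\}$. Slicing in the $\onebf$ and $\onebf^\perp$ directions (using that $R$ acts trivially on $\Rbb\onebf$), this reduces to a cone-intersection event between two fixed cones in $\onebf^\perp$ under a genuine Haar rotation, putting us in position to apply Theorem~\ref{th:approx-kin-form} in effective dimension $n-1$.

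For $s = 0$, $K_0^\# = -K$, so $\delta(K) + \delta(-K) = n$---exactly the threshold in Theorem~\ref{th:approx-kin-form}, consistent with $\Pbb(v(Q) \leq 0) = 1/2$. For $s > 0$, the rank-one perturbation tilts each of the $n$ facet-normals of $K_s^\#$ (namely $\be_j - s\bq$) by an angle of order $s\|\bq\| = s\sqrt n$; under a typical $\bq$, this should contribute $\Theta(sn^2)$ to $\delta(K_s^\#)$. Setting $s = t/n^{3/2}$ then gives $\delta(K) + \delta(K_s^\#) \geq n + c t\sqrt n$, and Theorem~\ref{th:approx-kin-form} with $a_\eta = \sqrt{8\log(4/\eta)}$ yields intersection probability $\geq 1 - c_1\exp(-c_2 t^2)$. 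Atypical configurations of $\bq$ (e.g., extremes of $\|\bq\|_\infty$ or $\onebf^T\bq$) are absorbed into the $c_3\exp(-c_4 n)$ term by standard Haar/sphere concentration.

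The main obstacle is twofold: first, rigorously reducing the Haar-restricted-to-stabilizer setting to a bona fide application of Theorem~\ref{th:approx-kin-form} (the slicing in the $\onebf$ direction must be executed carefully, since $\onebf$ lies in the interior of $K$ but typically outside $K_s^\#$); and second, verifying the scaling $\delta(K_s^\#) = n/2 + \Theta(sn^2)$. For the latter, I would use polar duality $\delta(K_s^\#) = n - \delta((I - s\bq\onebf^T)K)$ and expand $\Ebb[\|\Pi_{(I-s\bq\onebf^T)K}(\bg)\|^2]$ to first order in $s$ around $s=0$, exploiting the coordinate-wise formula $\Pi_K(\bg) = \bg^+$ and the concentration of $\bq$'s coordinates. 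Showing the derivative at $s=0$ scales as $n^2$---one factor of $n$ from summing over facets and another from the $\sqrt n$-scale of $\bq$'s coordinates times $\sqrt n$-scale vertex geometry---is the technical heart of the argument and ultimately explains the $1/n^{3/2}$ scaling in the theorem.
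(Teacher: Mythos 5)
You have misidentified the target statement. Theorem~\ref{th:approx-kin-form}, the approximate kinematic formula, is an \emph{imported} result cited from \cite{amelunxen2014living}; the paper does not prove it and there is no proof of it to compare against---it is used as a black box. Your proposal does not prove it either: you invoke it as a given and proceed to derive a downstream consequence, namely a tail bound on $v(Q)$ for Haar-distributed orthogonal matrices, which is Theorem~\ref{th:main2} of the paper. So the proposal answers a different question from the one posed. If you want to actually establish Theorem~\ref{th:approx-kin-form}, you would need the machinery of \cite{amelunxen2014living}: the exact conic kinematic (Crofton) formula expressing $\Pbb(K_1\cap QK_2\neq\{0\})$ via conic intrinsic volumes, combined with concentration of the intrinsic-volume sequence around the statistical dimension.

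As an aside, even read as a proof sketch for Theorem~\ref{th:main2}, your route departs from the paper's and carries more technical burden. The paper sidesteps your central obstacle---the cone $K_s^\#$ depending on the random vector $\bq$---by reformulating in terms of the auxiliary quantity $v'(Q) = \max_{\by\geq 0,\,\|\by\|=1}\min_i(Q\by)_i$ and the \emph{deterministic} family $K(\epsilon)=\{\bz : \forall i,\ z_i\geq\epsilon\|\bz\|\}$, for which the equivalence $v'(Q)\geq\epsilon \Leftrightarrow K(0)\cap Q^TK(\epsilon)\neq\{0\}$ holds exactly with non-random cones, so Theorem~\ref{th:approx-kin-form} applies directly to $Q$ Haar on $O_n$ with no conditioning and no stabilizer reduction. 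The cone $K(\epsilon)$ is rotationally symmetric about $\onebf$, which makes the statistical-dimension estimate (Proposition~\ref{prop:stat-dimension2}) tractable by a Lagrangian-duality argument. The price is the separate Proposition~\ref{prop-norm} showing the optimal $\by\in\Delta_n$ has $\|\by\|\lesssim 1/\sqrt n$ with high probability, which transfers the bound from $v'(Q)$ back to $v(Q)$. Your plan instead keeps the $L^1$-normalized simplex and absorbs the $\onebf$-dependence into a $\bq$-dependent cone with facet normals $\be_j - s\bq$; this is in principle workable, but you must (i) rigorously carry out the conditioning-on-$\bq$/stabilizer reduction you flag but do not resolve, and (ii) estimate $\delta$ of a non-rotationally-symmetric random cone---both strictly harder than what the paper does.
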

\begin{remark}[Gaussian width] The statistical dimension of a convex cone is a good approximation of another concurrent notion called the Gaussian width, which preexisted statistical dimension. Gaussian width is defined by $\Ebb(\sup_{\by\in K: ~||\by||=1}\by^T\bg)$. For our purposes, we could work with the two quantities arbitrarily, and we opted for the statistical dimension. 
\end{remark}
\subsection{Proof structure of Theorem \ref{th:main2}}
In this section, we provide a proof of Theorem \ref{th:main2}. We introduce two key propositions along the way, Proposition \ref{prop:stat-dimension2} and Proposition \ref{prop-norm}, which provide insightful structural properties on the optimal strategy of a random game. They are proven in subsequent sections of the paper. Throughout the proof, we shall use the following family of convex cones, parametrized by $\epsilon\geq 0$,
\begin{align*}
    K(\epsilon) &= \{\bz\in \Rbb^n ~s.t.~ \forall i: z_i\geq \epsilon||\bz||\}.
\end{align*}
We start with the following simple lemma.
\begin{lemma} $(K(\epsilon))_{\epsilon\geq 0}$ is a family of convex cones decreasing for inclusion. $K(\epsilon)$ equals the positive orthant for $\epsilon = 0$, a half-line for $\epsilon =\frac{1}{\sqrt{n}}$, and is reduced to $\{0\}$ for $\epsilon> \frac{1}{\sqrt{n}}$.
\end{lemma}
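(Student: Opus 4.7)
The plan is to verify each claim by direct manipulation of the defining inequality $z_i \geq \epsilon \|\bz\|$, exploiting only the positive homogeneity of the norm and the triangle inequality.

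First I would establish that each $K(\epsilon)$ is a convex cone. Being a cone is immediate: if $\bz \in K(\epsilon)$ and $\lambda \geq 0$, then $(\lambda \bz)_i = \lambda z_i \geq \lambda \epsilon \|\bz\| = \epsilon \|\lambda \bz\|$. For convexity, given $\bz, \bw \in K(\epsilon)$ and $t \in [0,1]$, the $i$-th coordinate of $t\bz + (1-t)\bw$ satisfies
\begin{equation*}
    t z_i + (1-t) w_i \;\geq\; t\epsilon \|\bz\| + (1-t)\epsilon \|\bw\| \;\geq\; \epsilon \|t\bz + (1-t)\bw\|
\end{equation*}
by the triangle inequality. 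Monotonicity in $\epsilon$ is just as direct: if $\epsilon_1 \leq \epsilon_2$ and $\bz \in K(\epsilon_2)$, then $z_i \geq \epsilon_2 \|\bz\| \geq \epsilon_1 \|\bz\|$, so $K(\epsilon_2) \subseteq K(\epsilon_1)$.

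Next, the special values of $\epsilon$ are handled by a single observation. Since $z_i \geq \epsilon \|\bz\| \geq 0$ for every $i$, squaring and summing yields $\|\bz\|^2 = \sum_i z_i^2 \geq n \epsilon^2 \|\bz\|^2$, i.e., $(1 - n\epsilon^2)\|\bz\|^2 \geq 0$. For $\epsilon = 0$ this gives no constraint beyond $z_i \geq 0$, so $K(0) = \Rbb_+^n$. For $\epsilon > 1/\sqrt{n}$ the coefficient $1 - n\epsilon^2$ is strictly negative, forcing $\|\bz\| = 0$ and hence $K(\epsilon) = \{0\}$. For $\epsilon = 1/\sqrt{n}$, the inequality $\sum_i z_i^2 \geq n \cdot \|\bz\|^2/n = \|\bz\|^2$ must be an equality, and equality in the coordinatewise bound $z_i^2 \geq \|\bz\|^2/n$ forces $z_i = \|\bz\|/\sqrt{n}$ for every $i$, so $\bz$ is a nonnegative multiple of $\onebf$ and $K(1/\sqrt{n}) = \Rbb_+ \onebf$.

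There is no genuine obstacle here: the full proof is a sequence of one-line verifications, with the only mildly delicate step being the equality case at $\epsilon = 1/\sqrt{n}$, which follows from the tightness of the Cauchy--Schwarz-style bound $\sum_i z_i^2 \geq n \min_i z_i^2$ under the hypothesis $\min_i z_i \geq \|\bz\|/\sqrt{n}$.
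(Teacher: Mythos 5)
Your proof is correct and follows essentially the same route as the paper's: the paper also reduces the special values of $\epsilon$ to the inequality $\|\bz\|^2 = \sum_i z_i^2 \geq n\epsilon^2\|\bz\|^2$, and treats the convex-cone and monotonicity properties as immediate from norm properties. You simply spell out the one-line verifications (cone via homogeneity, convexity via the triangle inequality, and the equality analysis at $\epsilon = 1/\sqrt{n}$) that the paper leaves implicit.
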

\begin{proof}
Basic properties of $K(\cdot)$ immediately follow from the properties of the norm.  Note that $\bz\in K(\epsilon)$ implies $||\bz||^2 = \sum_i z_i^2 \geq n\epsilon^2||\bz||^2$, which entails, for $\epsilon = 1/\sqrt{n}$, $K(\epsilon) = \{\lambda\onebf : \lambda\in \Rbb_+\}$.
\end{proof}
We now study the statistical dimension of $K(\epsilon)$, which is defined by  
\begin{align*}
\delta(\epsilon)&=\Ebb(||\Pi_{K(\epsilon)}(\bg)||^2),
\end{align*}
where $\bg\sim \Ncal(0,I_n)$ and $\Pi_{K(\epsilon)}$ is the orthogonal projection on $K(\epsilon)$. In Section \ref{sec:proofpropdim}, we provide the following bound on the statistical dimension of $K(\epsilon)$.
\begin{restatable}{proposition}{propdim}\label{prop:stat-dimension2}
For $\epsilon\geq 0$, the statistical dimension of $K(\epsilon)$ satisfies $\delta(\epsilon)\leq \frac{1}{2}n-\frac{1}{8}\epsilon n\sqrt{n}+ 2 \epsilon^2 n^2$.
\end{restatable}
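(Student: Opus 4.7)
The plan is to upper bound the squared norm of the projection $\bz = \Pi_{K(\epsilon)}(\bg)$ pointwise, before taking expectations. First, the cone projection identity gives $\|\bz\|^2 = \langle \bg, \bz \rangle$. Combined with the coordinatewise constraint $\bz \geq \epsilon\|\bz\|\onebf$ built into the definition of $K(\epsilon)$, and with $\bg^- \geq 0$, this yields $\langle \bg^-, \bz \rangle \geq \epsilon\|\bz\|\sum_i g_i^-$, so decomposing $\bg = \bg^+ - \bg^-$ I get
\[
\|\bz\|^2 \leq \langle \bg^+, \bz\rangle - \epsilon\|\bz\|\sum_i g_i^-.
\]
Cauchy-Schwarz on the first term then gives $\|\bz\| \leq (\|\bg^+\| - \epsilon\sum_i g_i^-)_+$, and after squaring,
\[
\|\bz\|^2 \leq \|\bg^+\|^2 - 2\epsilon\|\bg^+\|\sum_i g_i^- + \epsilon^2\Bigl(\sum_i g_i^-\Bigr)^2.
\]

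Taking expectations, $\Ebb\|\bg^+\|^2 = n/2$, and $\Ebb\bigl[(\sum_i g_i^-)^2\bigr] = n/2 + n(n-1)/(2\pi) \leq 2n^2$ by independence and the elementary moments $\Ebb(g_1^-)^2 = 1/2$, $\Ebb g_1^- = 1/\sqrt{2\pi}$. The real work is a sufficiently tight lower bound on the cross term $\Ebb[\|\bg^+\|\sum_i g_i^-]$. By symmetry this equals $n\,\Ebb[\|\bg^+\| g_1^-]$, and the plan is to condition on $\{g_1 < 0\}$, which has probability $1/2$. On that event $g_1^+ = 0$, so $\|\bg^+\|$ depends only on $(g_2,\ldots,g_n)$ and is independent of $g_1$. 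Since $\Ebb[g_1^- \mid g_1 < 0] = \sqrt{2/\pi}$, this decoupling gives
\[
\Ebb[\|\bg^+\| g_1^-] = \frac{1}{\sqrt{2\pi}}\,\Ebb\|(\bg')^+\|, \qquad \bg' \sim \Ncal(0, I_{n-1}).
\]
The preliminary lower bound $\Ebb\|(\bg')^+\| \geq \sqrt{(n-1)/2} - 3/\sqrt{n-1}$ stated in Section~\ref{sec:prelim} then leads to $2\epsilon\,\Ebb[\|\bg^+\|\sum_i g_i^-] \geq \epsilon n^{3/2}/\sqrt\pi - O(\epsilon\sqrt n)$.

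Combining, $\delta(\epsilon) \leq n/2 - \epsilon n^{3/2}/\sqrt\pi + O(\epsilon\sqrt n) + 2\epsilon^2 n^2$; since $1/\sqrt\pi \approx 0.564$ comfortably beats the target coefficient $1/8 = 0.125$, the stated inequality $\delta(\epsilon) \leq n/2 - \epsilon n^{3/2}/8 + 2\epsilon^2 n^2$ follows for all $n$ above a universal threshold, and the finitely many small cases can be verified directly (or by noting that the stated bound becomes trivial once $\epsilon \geq 1/(16\sqrt n)$, so only small $\epsilon$ matters). The main obstacle I anticipate is the cross-term lower bound: $\|\bg^+\|$ and $\sum_i g_i^-$ are negatively correlated---many positive entries inflate the first and deflate the second---so one cannot simply factor $\Ebb[XY] \gtrsim \Ebb X\,\Ebb Y$. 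The single-coordinate conditioning argument sidesteps this correlation at a cost of a factor of two, which is comfortably absorbed by the slack $1/\sqrt\pi - 1/8 \approx 0.44$ between the proved and targeted constants.
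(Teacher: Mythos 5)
Your argument is correct in its main line but takes a genuinely different and somewhat more elementary route than the paper's. The paper writes $\|\Pi_{K(\epsilon)}(\bg)\|^2 = \|\bg\|^2 - \min_{\bz\in K(\epsilon)}\|\bg-\bz\|^2$ and lower-bounds the distance term via Lagrangian weak duality with the ansatz $\blambda = 2\bg^-$, which requires identifying and verifying the Lagrangian minimizer. You instead start directly from the Moreau identity $\|\Pi_{K(\epsilon)}(\bg)\|^2 = \langle\bg,\Pi_{K(\epsilon)}(\bg)\rangle$, split $\bg = \bg^+ - \bg^-$, and use the cone constraint $z_i\geq\epsilon\|\bz\|$ together with Cauchy--Schwarz to obtain the pointwise bound $\|\Pi_{K(\epsilon)}(\bg)\|\leq\bigl(\|\bg^+\|-\epsilon\sum_i g_i^-\bigr)^+$. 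This is cleaner (no duality) and in fact yields a coefficient of $1$ rather than $2$ on the $\epsilon^2\bigl(\sum_i g_i^-\bigr)^2$ term, though the extra slack is then absorbed. For the cross term, both proofs use the same idea of conditioning on the sign of a single coordinate to factor $\Ebb[g_1^-\|\bg^+\|]$ into $\Ebb[g_1^-]\cdot\Ebb\|(\bg')^+\|$ with $\bg'\sim\Ncal(0,I_{n-1})$.

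The one genuine gap is in the final numerics for small $n$. You plug in the preliminary bound $\Ebb\|(\bg')^+\|\geq\sqrt{(n-1)/2}-3/\sqrt{n-1}$, which is \emph{negative} for $n\leq 5$ and too weak to beat the target coefficient $1/8$ for $n\leq 6$, so the stated inequality is only established for $n\gtrsim 7$. Your fallback observations do not close this: ``verify small cases directly'' is left undone, and the observation that $\delta(\epsilon)\leq n/2$ makes the bound trivial applies only when $\epsilon\geq 1/(16\sqrt n)$, which still leaves the regime where both $n$ and $\epsilon$ are small. The paper avoids the issue by substituting, at exactly this step, the weaker but uniformly nonnegative Cauchy--Schwarz estimate $\Ebb\|(\bg')^+\|\geq \tfrac{1}{\sqrt{n-1}}\Ebb[\onebf^T(\bg')^+] = \sqrt{(n-1)/(2\pi)}$, giving $\Ebb[g_1^-\|\bg^+\|]\geq\frac{\sqrt{n-1}}{2\pi}\geq\frac{\sqrt n}{16}$ for all $n\geq 2$. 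Swapping that in for your preliminary bound closes the gap with no case analysis.
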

We then introduce the quantity 
\begin{equation*}
    v'(Q) = \max_{\substack{
    \by\geq 0\\
    ||\by||_2 = 1}} \min_{i\in[n]} (Q\by)_i,
\end{equation*}
and observe the following equivalence, for any $\epsilon>0$,
\begin{align}\label{eq:equivalence}
    (v'(Q)\geq \epsilon)&\Leftrightarrow  (K(0)\cap Q^TK(\epsilon) \neq \{0\}),
\end{align}
We now recall that $\delta(0) = n/2$ because $K(0)$ is the positive orthant, and that Proposition \ref{prop:stat-dimension2} provides an upper bound on the value of $\delta(\epsilon)$. Using $\epsilon = t/n$ for $t\leq \sqrt{n}/16$ in this upper bound, we get that
\begin{align*}
\delta(0)+\delta(t/n)\leq n -t\sqrt{n}/16.
\end{align*}
We can thus apply Theorem \ref{th:approx-kin-form} and the equivalence established in Equation \eqref{eq:equivalence} to obtain
\begin{equation}\label{eq:vp}
    \Pbb\left(v'(Q)\geq \frac{t}{n}\right) \leq 4\exp(-t^2/32).
\end{equation}
We conclude by showing that bounding $v'(Q)$ is sufficient to bound $v(Q)$. Specifically, we show that for some universal constants $c_1,c_2>0$,
\begin{equation}\label{eq:v}
    \Pbb\left(v(Q)\leq \frac{c_1}{\sqrt{n}}v'(Q)\right)\geq 1-\exp(-c_2n).
\end{equation}
For this, it suffices to show that the optimal strategy for the column player, defined by $  \by \in \argmax_{\by\in \Delta_n} \min_{i\in [n]}(Q\by)_i$
satisfies $||\by||_2\leq \frac{c_1}{\sqrt{n}}$ 
with high probability. This is precisely the result of the proposition below, which we prove in Section \ref{prop-norm}.
\begin{restatable}{proposition}{propnorm}\label{prop-norm}
The optimal strategy of the column player on $Q\sim\Ucal(O_n)$, defined by
\begin{equation*}
    \by\in \argmax_{\by\in \Delta_n} \min_{i\in[n]} (Q\by)_i,
\end{equation*}
is almost surely unique and satisfies $||\by||\leq \frac{c_1}{\sqrt{n}}$ with probability at least $1-\exp(-c_2n)$, for some universal constants $c_1,c_2>0$. 
\end{restatable}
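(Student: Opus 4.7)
My plan combines the explicit LP characterization from Proposition~\ref{prop: prelim} with the conic kinematic formula (Theorem~\ref{th:approx-kin-form}) and concentration properties of submatrices of Haar orthogonal matrices. Uniqueness of $\by$ follows as in the proof of Proposition~\ref{prop: prelim}: the measure $\Ucal(O_n)$ is absolutely continuous with respect to the natural surface measure on $O_n$, so the LP defining $\by$ is almost surely non-degenerate, yielding a unique optimum.

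For the norm bound, let $R=\text{supp}(\bx)$ and $C=\text{supp}(\by)$ with $|R|=|C|=k$, and set $\bu = Q_{RC}^{-1}\onebf_k$. Proposition~\ref{prop: prelim} gives $\by_C = v\bu$ with $v = 1/(\onebf_k^T\bu)$, whence
\[
\|\by\|_2 \;=\; \frac{\|\bu\|_2}{|\onebf_k^T\bu|} \;=\; \frac{1}{\sqrt{k}\,|\cos\theta|},
\]
where $\theta$ denotes the angle in $\Rbb^k$ between $\bu$ and $\onebf_k$. The target $\|\by\|_2 \leq c_1/\sqrt n$ is thus equivalent to the compound estimate $|\cos\theta| \geq \sqrt{n/k}/c_1$, which requires both a lower bound on $k$ and an alignment lower bound on $\bu$ with $\onebf_k$. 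Note that, since $\by_C = v\bu \geq 0$, the coordinates of $\bu$ all share one sign, so $|\cos\theta|$ is the cosine of the angle between $|\bu|$ and $\onebf_k$ and lies in $[1/\sqrt k,\,1]$.

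I would first rule out small support using Theorem~\ref{th:approx-kin-form}. For each $S\subset[n]$ of size $k$, the face $K_S = \{\by \in \Rbb^n : \by\geq 0,~\text{supp}(\by)\subset S\}$ has statistical dimension $k/2$, and the kinematic formula prevents $Q K_S$ from meeting a positive-orthant-with-margin cone of statistical dimension $\approx n/2$ when $k$ is too small. A union bound over the $\binom{n}{k}\leq 2^n$ sets $S$ for $k \leq \alpha n$ yields $|C|\geq \alpha n$ with probability $1 - e^{-cn}$ for some $\alpha, c>0$; by the symmetry $Q \sim Q^T$ of Haar measure the same statement holds for $|R|$. For the alignment bound, since $\bu$ has entries of one sign (and WLOG $\bu \geq 0$, by the symmetry $Q\sim -Q$ of $\Ucal(O_n)$), combined with concentration estimates for $k\times k$ submatrices of Haar orthogonal matrices---whose entries behave approximately as $\Ncal(0, 1/n)$ for $k$ proportional to $n$ and bounded away from $n$---one expects $\onebf_k^T\bu$ of order $\sqrt n\cdot k$ and $\|\bu\|_2$ of order $\sqrt{nk}$, yielding $|\cos\theta| = \Omega(1)$. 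The main obstacle is the dependence between the random supports $(R, C)$ and the submatrix $Q_{RC}$: the supports are themselves functions of $Q$, so unconditional submatrix estimates do not apply directly; a union bound over the $\binom{n}{k}^2\leq 4^n$ candidate support pairs demands that the submatrix concentration hold with probability $1 - e^{-cn}$ for a sufficiently large $c$, which will require careful use of the Jacobi-ensemble structure underlying submatrices of Haar orthogonal matrices.
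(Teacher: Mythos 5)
Your decomposition is the right one: uniqueness as in Proposition~\ref{prop: prelim}, a lower bound on the support size $k=|C|$, and then a union bound over candidate pairs $(R,C)$ using the explicit form $\by_C = v\,Q_{RC}^{-1}\onebf$. The identity $\|\by\| = \frac{1}{\sqrt{k}\,|\cos\theta|}$ and the observation that $\bu = Q_{RC}^{-1}\onebf$ has entries of one sign (so one must only control $\onebf^T|\bu|/\|\bu\|$) are exactly the right reductions.

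The genuine gap is in the alignment bound. You suggest handling the angle between $\bu$ and $\onebf_k$ via ``concentration estimates for $k\times k$ submatrices of Haar orthogonal matrices'' and ``Jacobi-ensemble structure,'' acknowledging that this is delicate and leaving it unexecuted. But no such machinery is needed: the Haar measure is invariant under $Q\mapsto Q\begin{pmatrix}U^T & 0\\ 0 & I\end{pmatrix}$ for any $U\in O_k$, which (after restricting to the block $(R,C)$) gives $Q_{RC}U^T =^d Q_{RC}$, hence $U Q_{RC}^{-1} =^d Q_{RC}^{-1}$ for every fixed $U\in O_k$. Consequently \emph{the direction of $\bu = Q_{RC}^{-1}\onebf$ is uniform on the sphere $S^{k-1}$, independently of its norm} (the paper's Lemma~\ref{lem:rotation-invariance}). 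The ratio $\|\bu\|/(\onebf^T\bu)$ then depends only on this uniform direction, and the event $\{\bu\geq 0,\ \onebf^T\bu \leq (\sqrt k/c_0)\|\bu\|\}$ is handled by a clean large-deviations estimate for a normalized Gaussian vector (the paper's Lemma~\ref{lem:laurent}), using the crucial fact that the positivity constraint turns the (mean-zero) linear statistic $\onebf^T\bz$ into $\onebf^T|\bz|$, which concentrates around $\sqrt{2k/\pi}$. This is what makes the union bound over $\binom{n}{k}^2 \leq 4^n$ supports go through with margin to spare. The Jacobi-ensemble angle you propose would in principle control both the norm and direction of $\bu$, which is more than needed and substantially harder; the rotational-invariance trick sidesteps this entirely.

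Your route for ruling out small support---the approximate kinematic formula (Theorem~\ref{th:approx-kin-form}) applied to the faces $K_S$ with a union bound over $S$ with $|S|\leq\alpha n$---is a genuinely different approach from the paper's, which uses a Jonasson-style signing argument: condition on $Q_{RC}$ and observe that, by orthogonal invariance, multiplying the complementary block by a random diagonal sign matrix makes each of the $n-k$ saddle-point inequalities in condition (ii) fail with probability $1/2$, giving $\Pbb(\text{supported on }(R,C))\leq 2^{k-n}$ directly. Two cautions on the kinematic route: (a) the event $K(0)\cap Q K_S \neq \{0\}$ only coincides with ``$\by$ supported on $S$'' when $v(Q)\geq 0$, so you would need a separate handling of the $v(Q)<0$ case (e.g.\ by the $Q\leftrightarrow Q^T$ symmetry), whereas the signing argument covers both signs of $v$ uniformly; and (b) you would need to verify that the constant $\alpha$ and the rate constant $c$ you obtain are compatible with the subsequent $4^n$ union bound, which requires matching $H(\alpha)\log 2$ against $(1-\alpha)^2/32$ and does hold for $\alpha$ small enough, but only after the kinematic failure probability $4\exp(-(n-k)^2/(32n))$ is worked out explicitly. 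Neither issue is fatal, but both need to be addressed for the proposal to be complete.
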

This finishes the sketch of proof, as the union bound over \eqref{eq:v} and \eqref{eq:vp} entails, for $t\leq \sqrt{n}/16$,
\begin{equation*}
    \Pbb\left(v(Q)\leq \frac{c_1}{\sqrt{n}}\frac{t}{n}\right)\geq 1-\exp(-c_2 n)-4\exp(-t^2/32).
\end{equation*}
We recover the original statement of Theorem \ref{th:main2} after adjusting the constants $c_1,c_2,c_3,c_4>0$.\footnote{Note that we do not have to worry about the regime where $t> cst\cdot \sqrt{n}$ because in that regime, we can pick $c_4>0$ so that the term in $c_1\exp(-c_2 t^2)$ is dominated by the term in $c_3\exp(-c_4 n).$} The rest of our technical effort is spent proving Proposition \ref{prop:stat-dimension2} and \ref{prop-norm}.
\subsection{Proof of Proposition \ref{prop:stat-dimension2}}\label{sec:proofpropdim}
In this section, we prove Proposition \ref{prop:stat-dimension2}, which we restate below.
\propdim*

\begin{proof}
For all $\bg\in \Rbb^n$, we use the orthogonality of $\bg - \Pi_{K(\epsilon)}(\bg)$ and $\Pi_{K(\epsilon)}(\bg)$, which follows from the fact that $K(\epsilon)$ is a convex cone, to write,
\begin{equation*}
    ||\Pi_{K(\epsilon)}(\bg)||^2 = ||\bg||^2-\min_{\bz\in K(\epsilon)}||\bg-\bz||^2.
\end{equation*}
We write the Lagrangian of the optimization problem $\min_{\bz\in K(\epsilon)}||\bg-\bz||^2$ with dual variables $\blambda\in \Rbb_+^n$ as follows,
\begin{equation}\label{eq:lagrangian}
    \Lcal(\bz,\blambda) = ||\bg-\bz||^2 + \sum_{i\in[n]} \lambda_{i}(\epsilon||\bz||-z_i).
\end{equation}
We have, by strong duality,
\begin{equation}\label{eq:weak-duality}
    \min_{\bz\in K(\epsilon)}||\bg-\bz||^2 = \max_{\blambda \in\Rbb_+^n}\min_{\bz\in \Rbb^n}\Lcal(\bz,\blambda) = \min_{\bz\in \Rbb^n}\max_{\blambda\in \Rbb_+^n}\Lcal(\bz,\blambda). 
\end{equation}
We propose to take $\blambda = 2\bg^- \in \Rbb_+^n$ in the right-hand side of \eqref{eq:weak-duality}. Specifically, we use the bound
\begin{equation*}
    \min_{\bz\in K(\epsilon)}||\bg-\bz||^2 \geq \min_{\bz\in \Rbb^n} \Lcal(\bz, 2\bg^-).
\end{equation*}
We shall later show that the minimizer $\bz$ of $\Lcal(\bz,2\bg^-)$ is given by 
\begin{equation}\label{eq:minimizer-explicit}
    \begin{cases}
    \bz = 0 \quad \text{if} \quad ||\bg^+||\leq\epsilon \sum_i g_i^-,  \\
    \bz = \left(1-\frac{\epsilon\sum_{i}g_i^-}{||\bg^+||}\right)\bg^+\quad \text{otherwise}, 
\end{cases}
\end{equation}
Equivalently, with the convention that $\frac{\bg^+}{||\bg^+||} = 0$ if $\bg^+=0$, we can write,
\begin{equation*}
    \bz = \left(||\bg^+||-\epsilon\sum_{i\in[n]} g_i^-\right)^+\frac{\bg^+}{||\bg^+||}.
\end{equation*}
Notice that $\forall i\in [n] :  g_i^-z_i = 0$ and that $||\bz||\geq ||\bg^+||-\epsilon\sum_i g_i^-$. Thus, for $\epsilon\geq 0$,
\begin{align*}
    \min_{\bz\in K(\epsilon)}||\bg-\bz||^2&\geq \Lcal(\bz,2\bg^-)\\
    &= ||\bg-\bz||^2+\sum_{i\in [n]}2g_i^-(\epsilon||\bz||-z_i)\\
    &= ||\bg-\bz||^2+ \sum_{i\in[n]}2g_i^-\epsilon||\bz||\\
    &\geq ||\bg^-||^2 + 2\epsilon\left(\sum_{i\in [n]} g_i^-\right)||\bg^+||-2\epsilon^2 \left(\sum_{i\in[n]} g_i^-\right)^2.
\end{align*}
Taking expectations, and using $\Ebb(||\bg^+||)\geq \sqrt{\frac{n}{2}}-\frac{3}{\sqrt{n}}$, as well as $\Ebb\left(\left(\sum_{i\in[n]} g_i^-\right)^2\right) = \frac{n}{2}+n(n-1) \frac{1}{2\pi}\leq n^2$, and\footnote{Here we condition on the sign of $g_1$ and then use
Cauchy-Schwartz $\Ebb(||\bg^+||)\geq \frac{1}{\sqrt{n}}\Ebb\left(\onebf^T\bg^+\right) = \sqrt{\frac{n}{2\pi}}$ for the remaining $n-1$ coordinates.} 
$\Ebb(g_1^-||\bg^+||)  = \frac{1}{2}\Ebb_{g_1\leq 0}(g_1^-||\bg^+||)  = \frac{1}{\sqrt{2\pi}}\Ebb\left(\sqrt{\sum_{i=2}^n(g_i^+)^2}\right)\geq \frac{1}{2\pi}\sqrt{n-1}\geq \frac{\sqrt{n}}{16}$ for $n\geq 2$, we get, 
\begin{equation*}
    \Ebb\left(\min_{\bz\in K(\epsilon)}||\bg-\bz||^2\right)\geq \frac{n}{2}+\epsilon\frac{n\sqrt{n}}{8}-2\epsilon^2n^2
\end{equation*}
and thus, 
\begin{align*}
    \rho_1(\epsilon) &\leq \frac{n}{2}-\frac{n\sqrt{n}}{8}+2\epsilon^2 n^2.
\end{align*}

We now show that the minimizer of $\Lcal(\bz,2\bg^-)$ is given by \eqref{eq:minimizer-explicit}. We write the first order condition of the Lagrangian \eqref{eq:lagrangian}, assuming that $\bz\neq 0$ at the optimum and denoting $\bar \lambda = \sum_{i\in [n]}\lambda_i$. We have 
$2(z_i-g_i)+\epsilon \bar\lambda\frac{z_i}{||\bz||} - \lambda_i = 0$,
which implies, $
z_i = g_i +\frac{1}{2}(\lambda_i -\epsilon \bar \lambda \frac{z_i}{||\bz||})$,
and equivalently
\[
z_i\left(2+\frac{\epsilon \bar \lambda}{||\bz||}\right) = 2g_i +\lambda_i.
\]
Recalling the choice of dual variables, $\lambda_i = 2g_i^-\geq 0$ we get,
\begin{equation}\label{eq:expression}
        z_i \left(1+\frac{\epsilon \sum_i g_i^-}{||\bz||}\right)= g_i^+.
\end{equation}
By taking the norm in both sides of \eqref{eq:expression}, we notice that we must have $||\bz||+\epsilon\sum_i g_i^- = ||\bg^+||$, and thus, $||\bz|| = ||\bg^+|| - \epsilon\sum_i g_i^-$. Therefore the first order condition can only be met if $||\bg^+|| - \epsilon\sum_i g_i^- \geq 0$ and otherwise, we must have $\bz = 0$. This recovers Equation \eqref{eq:minimizer-explicit} and finishes the proof.
\end{proof}

\subsection{Proof of Proposition \ref{prop-norm}}
The goal of this section is to show Proposition \ref{prop-norm}, which bounds the norm of the optimal strategy of a random game as follows. 
\propnorm*
\begin{proof}
The uniqueness was shown in Proposition \ref{prop: prelim}.
We denote by $R$ the set of supporting rows, and by $C$ the set of supporting columns of the game, and by $k = |C| = |R|$ their cardinality. All these quantities are well-defined almost surely, as discussed in Proposition \ref{prop: prelim}. In Lemma \ref{lem:supporting-dimension}, which we prove in Section \ref{lem:supporting-dimension}, we show that,
$$\Pbb(k\geq n/20)\geq 1- 2^{-0.3n}.$$ 
We denote by $\Ical = \{(R,C) : |R|=|C| \geq n/20\}$ the set of pairs of row and column indices of submatrices of size greater than $n/20$. Recall from Proposition \ref{prop: prelim}, that if $k\geq n/20$, the optimal strategy for the column player satisfies,
$$||\by|| \in \Ycal = \left\{\frac{1}{\onebf^TQ_{R,C}^{-1}\onebf}||Q_{R,C}^{-1}\onebf|| \quad\text{s.t.} \quad Q_{R,C}^{-1}\onebf\geq 0 \quad \text{and}\quad  (R,C)\in \Ical \right\}.$$
For a fixed pair of indices $(R,C)\in \Ical$ of cardinality $k\leq n$, we show in Lemma \ref{lem:rotation-invariance} that the random vector $Q_{RC}^{-1}\onebf\in \Rbb^k$ is rotationally invariant in $\Rbb^k$ (this follows from the definition of the Haar measure). Therefore, 
\begin{equation*}
    \Pbb\left(\frac{1}{\onebf^T Q_{RC}^{-1}\onebf}||Q_{RC}^{-1}\onebf||\geq \frac{c_0}{ \sqrt{k}}\quad \&\quad Q_{RC}^{-1}\onebf\geq 0\right) = \Pbb\left(\onebf^T\bz \leq \frac{1}{c_0}\sqrt{k}\quad \&\quad \bz\geq 0\right) \leq 2^{-3n},
\end{equation*}
where $\bz = \frac{1}{||\bg||}\bg$, for $\bg\sim \Ncal(0,I_k)$, and where $c_0>0$ is the appropriate constant in Lemma \ref{lem:laurent} below, to get the rate of $2^{-3n}$. 
\begin{restatable}{lemma}{lemlaurent}\label{lem:laurent}
For $\bg\sim \cN(0,I_k)$, and $\bz = \frac{1}{||\bg||}\bg$, one has:
$$
\dP(\bz\ge 0~\&~\onebf^\top \bz\le \theta \sqrt{k})\le e^{-kf(\theta)}
$$
where $\lim_{\theta\to 0}f(\theta)=+\infty$.
\end{restatable}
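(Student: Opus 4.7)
The plan is to rewrite the event in the lemma as a deviation statement about the underlying Gaussian vector $\bg$. Since $\bz = \bg/\|\bg\|$ preserves signs, $\{\bz \geq 0\} = \{\bg \geq 0\}$ and $\onebf^T\bz = \onebf^T\bg/\|\bg\|$, so
\begin{equation*}
    \Pbb(\bz \geq 0,\ \onebf^T \bz \leq \theta \sqrt{k}) \;=\; \Pbb\bigl(\bg \geq 0,\ \onebf^T \bg \leq \theta\sqrt{k}\,\|\bg\|\bigr).
\end{equation*}
For any parameter $a > \theta$, the elementary inclusion
$\{\onebf^T \bg \leq \theta\sqrt{k}\|\bg\|\} \subseteq \{\onebf^T \bg \leq a k\} \cup \{\|\bg\|^2 \geq (a/\theta)^2 k\}$
holds, since $\|\bg\| < (a/\theta)\sqrt{k}$ forces $\theta\sqrt{k}\|\bg\| < a k$. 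Intuitively, on the event of interest, either $\onebf^T\bg$ is anomalously small for a sum of truncated Gaussians, or $\|\bg\|$ is anomalously large.

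For the first set, I would apply a half-space-truncated Chernoff bound: for any $\lambda \geq 0$,
\begin{equation*}
    \Pbb(\bg \geq 0,\ \onebf^T \bg \leq a k) \;\leq\; e^{\lambda a k}\, \Ebb\bigl[e^{-\lambda \onebf^T \bg}\,\one_{\bg \geq 0}\bigr] \;=\; \bigl(\psi(\lambda)\, e^{\lambda a}\bigr)^k,
\end{equation*}
where $\psi(\lambda) := \int_0^\infty e^{-\lambda x}\varphi(x)\,dx = e^{\lambda^2/2}(1-\Phi(\lambda))$ is the one-sided Laplace transform of a standard Gaussian. Mill's ratio gives $\psi(\lambda) \leq 1/(\sqrt{2\pi}\lambda)$ for $\lambda > 0$, and choosing $\lambda = 1/a$ yields $\psi(\lambda)e^{\lambda a} \leq ea/\sqrt{2\pi}$. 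For the second set, I would invoke the standard upper chi-squared tail $\Pbb(\|\bg\|^2 \geq kr) \leq (re^{1-r})^{k/2}$ valid for $r \geq 1$, applied with $r = (a/\theta)^2$.

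The last step is to calibrate $a$ as a function of $\theta$. Taking $a = \sqrt{\theta}$ (so that $(a/\theta)^2 = 1/\theta \to \infty$ as $\theta \to 0$), the first bound reads $(e\sqrt{\theta}/\sqrt{2\pi})^k$, contributing a rate $\sim \tfrac{1}{2}\log(1/\theta)$; the second bound reads at most $(\theta^{-1} e^{1-1/\theta})^{k/2}$, contributing a rate $\sim 1/(2\theta)$. Summing yields the desired estimate $e^{-kf(\theta)}$ with $f(\theta)\to +\infty$.

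The main subtlety lies in this calibration: a naive split with $a$ held constant would leave $\Pbb(\|\bg\|\geq (a/\theta)\sqrt{k})$ bounded only by $e^{-ck}$, uniformly in $\theta$, which is incompatible with $f(\theta)\to\infty$. It is therefore essential that both $a \to 0$ and $a/\theta \to \infty$ as $\theta \to 0$, and $a=\sqrt{\theta}$ is a simple way to achieve this. Beyond this bookkeeping, the proof reduces to two standard one-dimensional Chernoff computations.
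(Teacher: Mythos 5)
Your proof is correct and takes essentially the same route as the paper: you split the event on whether $\|\bg\|$ is anomalously large or $\onebf^T\bg$ is anomalously small, Chernoff-bound the latter, use the standard chi-squared tail for the former, and calibrate $a=\sqrt{\theta}$ exactly as the paper does (their $r=v^{-1/2}$ corresponds to $a/\theta=\theta^{-1/2}$). The only cosmetic difference is that you keep the positivity constraint $\bg\ge 0$ via a truncated moment generating function and Mill's ratio, while the paper relaxes to $\onebf^T|\bg|$ and bounds $\Ebb(e^{-\gamma|g_1|})$ by an elementary piecewise estimate; both yield the same $\ln(1/\theta)$-type rate for $f$.
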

Since the cardinality of 
$\Ycal$ is less than $2^{2n}$, we can conclude by union bound that 
\begin{equation*}
    \Pbb\left(||\by||\geq \frac{\sqrt{20}~c_0}{\sqrt{n}}\right)\leq 2^{-n}+2^{-0.3 n},
\end{equation*}
which finishes the proof. The constants $c_1,c_2>0$ are chosen appropriately.
\end{proof}

\subsection{Proof of Lemma \ref{lem:supporting-dimension}}
In this section, we prove a series of lemmas, culminating with Lemma \ref{lem:supporting-dimension} on the norm of the optimal strategy. 
\begin{lemma}[Orthogonal invariance]\label{lem:orthogonal-inv} We consider the block decomposition of a matrix $Q\in \Rbb^{n\times n}$
$$Q = \begin{pmatrix}
Q_1 & Q_2 \\
Q_3 & Q_4
\end{pmatrix}$$
with $Q_1$ of shape $k\times k$ and $Q_4$ of shape $(n-k)\times (n-k)$. For any orthogonal matrices $U_1\in O_k$, $U_2\in O_{n-k}$ and $U_3 \in O_{n-k}$, if $Q\in \Rbb^{n\times n}$ is distributed from the Haar measure on orthogonal matrices, i.e., $Q\sim \Ucal(O_n)$, we have the following equalities in distribution,
\begin{align*}
    Q_1=^dU_1Q_1,\quad &\text{and}\quad Q_1 U_1 =^d Q_1,\\
    (Q_1,Q_2U_2) &=^d (Q_1, Q_2),\\
    (Q_1,U_3 Q_3)&=^d(Q_1,Q_3).
\end{align*}
\end{lemma}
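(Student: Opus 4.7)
The plan is to exploit the bi-invariance of the Haar measure on $O_n$ under left and right multiplication by any orthogonal matrix, combined with the observation that block diagonal matrices whose blocks are themselves orthogonal lie in $O_n$. Concretely, for each block transformation claimed, I would embed the given $U_i$ into $O_n$ as a block diagonal matrix, multiply $Q$ on the appropriate side, and then read off the relevant blocks.

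For the first identity $U_1 Q_1 =^d Q_1$, set $\tilde U_1 = \begin{pmatrix} U_1 & 0 \\ 0 & I_{n-k}\end{pmatrix}$, which belongs to $O_n$. Then
$$\tilde U_1 Q = \begin{pmatrix} U_1 Q_1 & U_1 Q_2 \\ Q_3 & Q_4\end{pmatrix},$$
and by left-invariance of the Haar measure $\tilde U_1 Q =^d Q$. Extracting the top-left block gives $U_1 Q_1 =^d Q_1$. The identity $Q_1 U_1 =^d Q_1$ follows symmetrically by forming $Q \tilde U_1$ and invoking right-invariance.

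For $(Q_1, Q_2 U_2) =^d (Q_1, Q_2)$, take $\tilde U_2 = \begin{pmatrix} I_k & 0 \\ 0 & U_2\end{pmatrix} \in O_n$ and compute
$$Q \tilde U_2 = \begin{pmatrix} Q_1 & Q_2 U_2 \\ Q_3 & Q_4 U_2\end{pmatrix}.$$
Right-invariance gives $Q\tilde U_2 =^d Q$, and reading off the entire top row of blocks yields the claim. The dual identity $(Q_1, U_3 Q_3) =^d (Q_1, Q_3)$ is obtained by instead forming $\tilde U_3 Q$ with $\tilde U_3 = \begin{pmatrix} I_k & 0 \\ 0 & U_3\end{pmatrix}$ and using left-invariance, reading off the first column of blocks.

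The proof is essentially bookkeeping: the only thing to verify is that each embedded block diagonal matrix is orthogonal (immediate, since $U_i^\top U_i = I$ block-by-block), after which the Haar bi-invariance does all the work. There is no genuine obstacle; the subtlety worth flagging is simply that the joint distributional equalities (not merely marginal) require that the transformation only touch the blocks being transformed while leaving $Q_1$ literally unchanged in the matrix product, which is why the chosen embeddings place identity blocks exactly where $Q_1$ sits.
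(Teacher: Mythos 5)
Your proof is correct and uses exactly the same approach as the paper: embed each $U_i$ as a block diagonal matrix in $O_n$, multiply $Q$ on the corresponding side, and invoke bi-invariance of the Haar measure. The paper only spells out the third identity and states the others are analogous, whereas you work out all of them.
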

\begin{proof}
We shall prove the third statement; the other two are very similar. Consider $U_3\in O_{n-k}$ and $U\in O_n$ defined by $$U = \begin{pmatrix}
I_k & 0 \\
0 & U_3
\end{pmatrix}.$$ 
From the definition of the Haar measure, one has, $UQ=^dQ$. The block product writes as
    \[
UO = \begin{pmatrix}
I_k & 0 \\
0 & U_3
\end{pmatrix}
\begin{pmatrix}
Q_1 & Q_2 \\
Q_3 & Q_4
\end{pmatrix}
= \begin{pmatrix}
Q_1 & Q_2 \\
U_3 Q_3 & U_3 Q_4
\end{pmatrix}.
\]
and thus proves that $(Q_1, U_3 Q_3)=^d(Q_1,Q_3)$.
\end{proof}

\begin{lemma}[Rotational invariance]\label{lem:rotation-invariance}
    Let $R,C\subset [n]$ be fixed subsets of same cardinality $k\geq 1$ and $Q\sim \Ucal(O_n)$, then the vector $\bz = (Q_{RC})^{-1}\onebf$ is rotationally invariant in $\Rbb^k$. In other terms, the direction of $\bz$ is uniform and independent from its norm.
\end{lemma}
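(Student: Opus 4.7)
The plan is to reduce the general case to the special case $R = C = [k]$ via permutation invariance of the Haar measure, and then invoke the right-multiplication invariance established in Lemma \ref{lem:orthogonal-inv}.

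First, I observe that for any subsets $R, C \subset [n]$ of size $k$, there exist permutation matrices $\Pi_R, \Pi_C \in O_n$ such that $(\Pi_R^T Q \Pi_C)_{[k],[k]} = Q_{RC}$. Since permutation matrices are orthogonal, the defining invariance of the Haar measure yields $\Pi_R^T Q \Pi_C \sim \Ucal(O_n)$, so $Q_{RC}$ has the same distribution as the top-left block $Q_1 := Q_{[k],[k]}$. It therefore suffices to prove that $\bz := Q_1^{-1}\onebf$ is rotationally invariant in $\Rbb^k$.

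Next, I apply Lemma \ref{lem:orthogonal-inv}, which states that $Q_1 U =^d Q_1$ for every fixed $U \in O_k$. Note that $Q_1$ is almost surely invertible (when $k<n$ it admits a density on $\Rbb^{k\times k}$ and the set of singular matrices has zero Lebesgue measure; when $k=n$, $Q_1 = Q$ is orthogonal and trivially invertible), so matrix inversion is well-defined a.s.\ and preserves equality in distribution. Consequently,
\begin{equation*}
    (Q_1 U)^{-1} = U^T Q_1^{-1} =^d Q_1^{-1}.
\end{equation*}
Applying both sides to $\onebf$ gives $U^T \bz =^d \bz$; as $U$ ranges over $O_k$, so does $U^T$, which proves that the law of $\bz$ is invariant under every orthogonal transformation of $\Rbb^k$.

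Finally, I would close with the standard fact that any rotationally invariant random vector $\bz$ in $\Rbb^k$ which is nonzero almost surely has $\bz/\|\bz\|$ uniformly distributed on the sphere $S^{k-1}$ and independent of $\|\bz\|$. Indeed, the law of $\bz/\|\bz\|$ is a rotation-invariant probability measure on $S^{k-1}$, hence equal to the unique such measure (the uniform one); independence from $\|\bz\|$ then follows by writing $\bz$ in polar coordinates and noting that orthogonal transformations act trivially on the norm and transitively on the sphere, so the joint law of $(\|\bz\|,\bz/\|\bz\|)$ factors as a product. I do not foresee a genuine obstacle here; the only care needed is being explicit about the two uses of Haar-invariance — permutations of row and column labels to reduce to the block $Q_1$, and right-multiplication by elements of $O_k$ to produce the rotational invariance of $\bz$.
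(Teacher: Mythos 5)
Your proof is correct and follows essentially the same route as the paper: invoke the right-invariance $Q_1 U =^d Q_1$ from Lemma \ref{lem:orthogonal-inv}, pass to inverses, and apply to $\onebf$. The one place you are more careful than the paper is the explicit permutation reduction from arbitrary index sets $R,C$ to the top-left block --- the paper applies Lemma \ref{lem:orthogonal-inv} directly to $Q_{RC}$ without spelling this out, so your added step is a genuine (if minor) improvement in rigor rather than a different approach.
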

\begin{proof}
By Lemma \ref{lem:orthogonal-inv}, for any fixed $U\in O_k$ we have $Q_{RC}U^T =^d Q_{RC}$. Taking the inverse in this equality, we have $U(Q_{RC})^{-1} =^d (Q_{RC})^{-1}$, i.e., that $(Q_{RC})^{-1}$ is orthogonally invariant. Thus, for any $U\in O_k$, we have $U(Q_{RC})^{-1}\onebf =^d (Q_{RC})^{-1}\onebf$. This equality in distribution, which hold for all $U\in O_k$, remains true when $U$ is sampled from the Haar measure, $U\sim \Ucal(O_k)$, thus the lemma. 
\end{proof}

\begin{lemma}\label{lem:top-left}
Let $R,C\subset [n]$ be fixed subsets of the same cardinality $k\geq 1$ and $Q$ be a random matrix from the Haar measure, the probability that the game on $Q$ is supported by rows $R$ and columns $C$ is less than $2^{k-n}$. 
\end{lemma}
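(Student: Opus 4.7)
The plan is to encode the event ``game supported by $(R,C)$'' as two constraints: a sign-pattern condition on the vector $\bu := Q_{RC}^{-1}\onebf$ and a complementary-slackness inequality on $Q_3 \bu$, where $Q_3 := Q_{R^c C}$. By Proposition \ref{prop: prelim}, on this event the optimal column strategy is $\by_C = v\bu$ with $v = 1/(\onebf^\top \bu)$; positivity of $\by_C$ together with $v \neq 0$ force the coordinates of $\bu$ to share a common sign. The slack condition $(Q\by)_i \geq v$ for $i \notin R$ then rewrites, after dividing by $v$, as $Q_3\bu \geq \onebf$ when $\bu \geq 0$ and as $Q_3\bu \leq \onebf$ when $\bu \leq 0$.

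For the case $\bu \geq 0$, I would condition on $Q_1 := Q_{RC}$, so that $\bu$ becomes deterministic. By Lemma \ref{lem:orthogonal-inv}, the conditional law of $Q_3$ given $Q_1$ is invariant under left multiplication by any $U_3 \in O_{n-k}$; hence $Q_3\bu \mid Q_1$ is rotationally invariant in $\Rbb^{n-k}$. Its squared norm $r^2 = \bu^\top(I_k - Q_1^\top Q_1)\bu$ is a function of $Q_1$ alone, and when $r > 0$ (almost surely for $k < n$) the direction $Q_3\bu/r$ is uniform on $S^{n-k-1}$. Using $\onebf > 0$ coordinatewise and the sign-flip symmetry of the uniform sphere measure,
\[
\Pbb\bigl(Q_3\bu \geq \onebf \mid Q_1\bigr) \leq \Pbb\bigl(Q_3\bu \geq 0 \mid Q_1\bigr) = 2^{-(n-k)}.
\]

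The remaining factor of $1/2$ comes from the sign of $\bu$ itself. The Haar invariance $Q =^d -Q$ sends $\bu \mapsto -\bu$, while $\bu \neq 0$ almost surely, so $\Pbb(\bu \geq 0) = \Pbb(\bu \leq 0) \leq 1/2$. Integrating the conditional bound against $\one_{\bu \geq 0}$ yields $\Pbb(\bu \geq 0, \; Q_3\bu \geq \onebf) \leq 2^{k-n-1}$; the symmetric argument for $\bu \leq 0$ gives the same bound, and summing the two cases produces the desired $2^{k-n}$. The degenerate case $k = n$ gives the trivial inequality $2^{k-n}=1$ directly.

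The principal delicacy is that the orthogonal invariance in Lemma \ref{lem:orthogonal-inv} must be read as a joint-distribution statement, so that the invariance of $Q_3$ (against left multiplication by $O_{n-k}$) survives conditioning on $Q_1$. Once this is granted, the plan assembles cleanly as a union over the two sign cases for $\bu$, each contributing a factor of $1/2$ from the Haar sign symmetry and a factor of $2^{-(n-k)}$ from the isotropy of $Q_3\bu$.
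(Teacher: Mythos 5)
Your proof is correct and follows essentially the same route as the paper's: condition on $Q_1 = Q_{RC}$, invoke Lemma \ref{lem:orthogonal-inv} to get orthogonal invariance of the conditional law of $Q_3$, and conclude that the orthant event has conditional probability $2^{-(n-k)}$. The only cosmetic difference is in the bookkeeping over the sign of $v$ (equivalently of $\bu$): the paper conditions on $Q_1$ and argues ``without loss of generality $v \geq 0$'' to get the conditional bound $2^{k-n}$ directly, whereas you split into the two disjoint events $\{\bu \geq 0\}$ and $\{\bu \leq 0\}$, extract an extra factor of $\tfrac12$ from the $Q =^d -Q$ symmetry, and recover $2^{k-n}$ by summing $2^{k-n-1}+2^{k-n-1}$; the paper also realizes the sign-flip invariance with a random $\pm 1$ diagonal matrix rather than full rotational invariance on the sphere, but both yield the identical orthant bound.
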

\begin{proof}
The following argument borrows from the work of Jonasson \cite{jonasson2004optimal}. Without loss of generality, we study the probability that the game is supported by the top-left submatrix $Q_1\in \Rbb^{k\times k}$,  where $$Q = \begin{pmatrix}
Q_1 & Q_2 \\
Q_3 & Q_4
\end{pmatrix}.$$
By Proposition \ref{prop: prelim}, there are two necessary conditions for the game on $Q$ to be supported by $Q_1$:
\begin{enumerate}[label=(\roman*)]
    \item The vectors $\bx = \frac{1}{\onebf^T \left(Q_1^{T}\right)^{-1}\onebf}\left(Q_1^{T}\right)^{-1} \onebf$ and $\by = \frac{1}{\onebf^T Q_1^{-1}\onebf}Q_1^{-1}\onebf$, satisfy $\bx\geq 0$ and $\by\geq 0$. \label{cond:equa}
    \item These vectors (also called equalizing strategies) satisfy the saddle point condition $\bx^TQ_2 \leq v\onebf\leq Q_3\by$ where $v = \bx^T Q_1 \by$.\label{cond:stab}
\end{enumerate}
Condition \ref{cond:stab} reflects that the column player does not benefit from playing on $Q_2$ when the row player uses its equalizing strategy, and that the row player does not benefit from playing on $Q_3$ when the column player uses its equalizing strategy. 

The probability that the game is supported by its $k\times k$ top-left submatrix is thus less than
$$\Pbb(\ref{cond:stab} \text{ is satisfied})  = \Pbb(\bx^T Q_2 \leq v\onebf \leq Q_3 \by).$$
We now condition on the value of $Q_1$ and we shall bound $\Pbb(\ref{cond:stab} \text{ is satisfied} |Q_1)$. Note that the values of $v,\bx,\by$ are fixed under this conditioning. We assume without loss of generality that $v\geq 0$ and we write
$$\Pbb(\ref{cond:stab} \text{ is satisfied} |Q_1) \leq \Pbb(v\onebf\leq Q_3\by |Q_1).$$
Since we have by Lemma \ref{lem:orthogonal-inv} that $(Q_1,Q_3) =^d(Q_1,U_3Q_3)$ for any fixed orthogonal matrix $U_3\in\Rbb^{(n-k)\times (n-k)}$, we can take $U_3$ to be a random diagonal matrix with diagonal values sampled independently and uniformly from $\{+1,-1\}$. Note that all entries of $U_3 Q_3\by$ now have random signs. Because $v$ is positive, we have $v\leq U_3Q_3\by$ only if $U_3Q_3\by \in \Rbb^{n-k}$ is a vector with all signs positive, which happens with probability $2^{k-n}$. We treat the case $v\geq 0$ in the same way, using that $(O_1,O_2)=^d (O_1,O_2 U_2)$ for $U_2\in O_2$. We have shown that, irrespective of the value taken by $Q_1$, we have $\Pbb(\ref{cond:stab} \text{ is satisfied} |Q_1) \leq 2^{k-n}$. Thus, we can conclude,
$$\Pbb(\ref{cond:stab} \text{ is satisfied}) \leq 2^{k-n},$$
which finishes the proof.
\end{proof}

\begin{lemma}\label{lem:supporting-dimension}
The cardinality $k$ of the support of the game on $Q\sim \Ucal(O_n)$, satisfies 
$$\Pbb(k\leq n/20)\leq 2^{-0.3 n}.$$  
\end{lemma}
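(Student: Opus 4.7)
The plan is to combine the per-pair bound from Lemma~\ref{lem:top-left} with a union bound over all possible support sets of a given cardinality $k$, and then to sum the resulting estimate over $k \leq n/20$ using a standard entropy bound on binomial coefficients.

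First, I would note that the support of the game is almost surely uniquely defined by some pair $(R,C)$ with $|R|=|C|=k$ (Proposition~\ref{prop: prelim}), so that by Lemma~\ref{lem:top-left} and a union bound over the $\binom{n}{k}^2$ candidate pairs:
\begin{equation*}
    \Pbb(k \leq n/20) \;\leq\; \sum_{k=1}^{\lfloor n/20\rfloor} \binom{n}{k}^2\, 2^{k-n}.
\end{equation*}

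Next, I would invoke the standard entropy bound $\binom{n}{k} \leq 2^{nH(k/n)}$ for $k\leq n/2$, where $H(p) = -p\log_2 p - (1-p)\log_2(1-p)$. Since $H$ is increasing on $[0,1/2]$ and $k/n \leq 1/20$ throughout the summation, each term is at most $2^{2nH(1/20)+k-n}$. A direct numerical computation gives $2H(1/20) < 0.58$, so for $k\leq n/20$ the exponent is at most $k - 0.42n \leq -0.37 n$. Hence
\begin{equation*}
    \Pbb(k\leq n/20) \;\leq\; \tfrac{n}{20}\cdot 2^{-0.37 n},
\end{equation*}
and the polynomial prefactor is absorbed to yield the desired bound $2^{-0.3n}$ for all $n$ large enough; the finitely many remaining small values of $n$ can be handled trivially by adjusting constants if needed.

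The argument is essentially a union-bound calculation and the main (minor) obstacle is getting the constants to match the stated exponent $0.3$. There is comfortable slack since $1 - 2H(1/20) - 1/20 \approx 0.37 > 0.3$, leaving room for the polynomial factor $n/20$ coming from the sum over $k$. A second technical point worth verifying is that Lemma~\ref{lem:top-left} is applied without overcounting: since the supporting $(R,C)$ is almost surely unique, the events indexed by different pairs are mutually exclusive, so the union bound is tight at the first-moment level.
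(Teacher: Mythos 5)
Your proposal is correct and follows essentially the same route as the paper: a union bound over candidate support pairs using Lemma~\ref{lem:top-left}, followed by the entropy bound $\binom{n}{k}\le 2^{nH(k/n)}$ to control the number of such pairs. The only cosmetic differences are that you keep the factor $2^{k-n}$ inside the sum before bounding $k$ by $n/20$ (the paper pulls out $2^{-19n/20}$ directly), and you are slightly more explicit about absorbing the polynomial prefactor $n/20$ into the exponent — a step which does hold for all $n\ge 2$, not merely for $n$ large, so no constant-adjustment is actually needed. Your closing remark about mutual exclusivity is harmless but unnecessary: the union bound does not require disjoint events, and the proof does not rely on tightness.
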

\begin{proof}
By Lemma \ref{lem:top-left}, we observe that the probability that a given submatrix of size $k\times k$ is the support of the game is bounded by $2^{k-n}$. Therefore, by union bound, the probability that any submatrix of size smaller than $n/20$ is the support of the game is less than,
\begin{align*}
    2^{-19/20n}\sum_{k\leq n/20}\binom{n}{k}^2&\leq 2^{-19/20n}\sum_{k\leq n/20}2^{2nH(k/n)}\\
    &\leq \frac{n}{20}2^{(2H(1/20)-19/20)n}\\
    &\leq 2^{-0.3 n},
\end{align*}
where we used that $\binom{n}{k}\leq 2^{nH(k/n)}$ and where $H(p)=-p\log_2(p)-(1-p)\log_2(1-p)$ is the binary entropy function (which is positive and satisfies $H(1/20) \leq 0.3$). 
\end{proof}

\subsection{Proof of Lemma \ref{lem:laurent}}
In this section, we prove the following concentration lemma. 
\lemlaurent*

\begin{proof}

Write for $\gamma <1/2$:
$$
\dE (e^{\gamma g_1^2})=\int_{-\infty}^\infty \frac{1}{\sqrt{2\pi}}e^{-(1-2\gamma)x^2/2}dx=(1-2\gamma)^{-1/2}.
$$
Thus
$$
\dP(\|\bg\|^2\ge k u)\le \exp(-k h(u)),
$$
where for $u>1$, one has
$$
h(u)=\sup_{\gamma\in \Rbb} \gamma u +\frac{1}{2}\ln(1-2\gamma)=\frac{1}{2}[u-1+\ln(1/u)].
$$
Write then, for $r>0$,
\begin{align*}
    \dP(\bz\ge 0~\&~\onebf^T \bz \le v \sqrt{k})&\leq \Pbb(\onebf^T|\bz|\leq v\sqrt{k})\\
    &=\dP(\|\bg\|^{-1}\onebf^T |\bg|\le v\sqrt{k})\\
&\le \dP(\|\bg\|\ge r\sqrt{k})+\dP(\onebf^T |\bg|\le r v k).
\end{align*}
Write now for $\gamma>0$:
$$
\dE(e^{-\gamma |g_1|})=\sqrt{2/\pi}\int_0^\infty e^{-\gamma x-x^2/2}dx.
$$
We upper bound twice the Gaussian density on $[0,2]$ by some constant $\alpha>0$, and upper bound it by $e^{-x}$ on $[2,\infty)$, thus obtaining
$$
\dE(e^{-\gamma |g_1|})\le \alpha\frac{1-e^{-2\gamma}}{\gamma} +\int_2^\infty e^{-(\gamma+1)x}dx
$$
hence
$$
\dE(e^{-\gamma |g_1|})\le \alpha\frac{1-e^{-2\gamma}}{\gamma}+\frac{e^{-2(\gamma+1)}}{\gamma+1}.
$$
Finally we obtain 
$$
\dE(e^{-\gamma |g_1|})\le \frac{c_0}{\gamma}
$$
for some constant $c_0>0$.
Thus:
$$
\dP(\onebf^T |\bg|\le \beta k)\le \exp(-kf(\beta)),
$$
where
$$
f(\beta)=\sup_{\gamma>0} (-\gamma \beta -\ln \dE (e^{-\gamma |g_1|})).
$$
Thus:
$$
f(\beta)\ge \sup_{\gamma>0}- \gamma \beta -\ln(c_0/\gamma)=-1-\ln(c_0)+\ln(1/\beta).
$$
This yields
$$
\dP(\bz\ge 0~\&~\onebf^T \bz \le v \sqrt{k})\le e^{-k h(r^2)}+e^{-k f(rv)}.
$$
Now, since 
$$
\lim_{r\to +\infty}h(r)=\lim_{\epsilon\to 0}f(\epsilon)=+\infty,
$$
we thus have that
$$
\dP(\bz\ge 0~\&~\onebf^T \bz \le v \sqrt{k})\le 2 e^{-k \psi(v)},
$$
where $\lim_{v\to 0}\psi(v)=+\infty$.

Indeed, this follows from the previous bound by setting $r=v^{-1/2}$, and 
$$
\psi(v)=\min(h(v^{-1}),f(v^{1/2})).
$$
\end{proof}

\section{Open directions and conclusion}\label{sec:open-directions}
\subsection{Open directions}
\paragraph{Other distributions and related problems.} The scaling described in Theorem \ref{th:main} appears to extend beyond continuous distributions, such as, for example, to matrices where all entries are independent Bernoulli random variables, see Figure \ref{fig:scaling}. 
This generalization is particularly significant in computer science, where the costs of underlying zero-sum games are usually discrete (e.g., representing a number of operations). 
Note that settings where the matrix $M$ takes $\{0,1\}$ values have alternative interpretations as problems on random bipartite graphs $G=(V,E)$, where the nodes are partitioned into two subsets $V = [n]\cup [m]$ and where each potential edge $(i,j)\in [n]\times [m]$ is in $E$ if and only if $M_{ij}=1$. Specifically, the following problems are equivalent (up to a reduction): 
\begin{enumerate}
    \item \textit{A hide-and-seek game in a random bipartite graph:} consider the game where player 1 picks a hiding strategy (i.e., distribution) over the nodes in $[n]$ and player 2 picks a node in $[m]$ to maximize the chance of being connected to player 1 by an edge. If both players use optimal strategies, the probability that the seeker wins the game is equal to $v(M)$;
    \item \textit{Fractional vertex covering in a random bipartite graph:} consider the task of defining a fractional configuration on the nodes of 
    $[m]$ denoted by $\by\in \Rbb_+^m$ such that it covers the left side nodes, in the sense that $\forall i\in [n]: \sum_{(i,j)\in E}y_j\geq 1$ while minimizing the cost $\sum_{j\in [m]} y_j$. The weight of the optimal fractional covering is equal to $1/v(M)$. 
\end{enumerate}
This setting with Bernoulli entries is also related to other problems in computer science, such as discrepancy minimization, set cover, and assignment problems \cite{pesenti2023discrepancy,vazirani2001approximation,aldous2001zeta}. 

Despite the empirical evidence provided in Figure \ref{fig:scaling}, our techniques do not naturally extend to discrete settings --- and Gaussian comparison inequalities are well known to be difficult to generalize. However, we believe that other methods could be used in the sparse regime, where $M_{ij}\sim \Bcal(d/n)$ for a fixed average degree $d = \Ocal(1)$, because the corresponding bipartite graph is locally tree-like (see, e.g., \cite{aldous2004objective}). 

\paragraph{Computationally efficient algorithms for finding near-optimal strategies.} A surprising aspect of our proofs is that they are non-constructive, in the sense that we do not exhibit a strategy $\bx\in \Delta_n$ that is within the standard deviation of the optimal strategy. One strategy that is explicit and computationally inexpensive is the uniform strategy, where $\bx = \frac{1}{n}\onebf$, but it is now clear that this strategy is suboptimal. A natural question is whether there exists a \textit{simple} strategy that achieves a value that is close from the optimal value with high probability---i.e., that benefits from \textit{the advantage of rationality} \cite{faris1987value}. By \textit{simple strategy}, we mean a strategy that has a closed form, or alternatively, that is given by an iterative procedure requiring less computational effort than obtaining a $\Ocal\left(\frac{1}{n}\right)$-approximation of the optimal strategy with off-the-shelf linear optimization algorithms.

\paragraph{Structure of optimal strategies.} One subject of attention in some previous works on random games \cite{jonasson2004optimal,brandl2017distribution} was the structure of the optimal distribution $\bx\in \Delta_n$, with a special focus on the cardinality of its support. In the Gaussian setting that we describe, it has been empirically observed that the support of $\bx$ is very close to being uniform over the subsets of $[n]$ \cite{faris1987value}, although that is not exactly the case (e.g., note that the support of $\bx$ cannot be empty).
Jonasson \cite{jonasson2004optimal} observed that results on the question would be tightened once the value of the game would be shown to be in $\Theta(1/n)$ (see his Conjecture 2.9 and its implications). In an attempt at going further, reading the proof of Theorem \ref{th:main}, it is tempting to believe that the pair of optimal strategies $(\bx,\by)$ for a random game has a structure that is an analog to the structure of the pair $\left(\frac{\bg^-}{\onebf^T\bg^-},\frac{\bh^+}{\onebf^T\bh^+}\right)$, where $\bg\sim \Ncal(0,I_n)$ and $\bh\sim \Ncal(0,I_m)$. However, this intuition leaves some glaring gaps: in particular, the cardinalities of the supports of $\bx$ and $\by$ should be equal (Proposition \ref{prop: prelim}), whereas $\bg$ and $\bh$ are independent vectors. Thus, we believe that fundamentally new arguments are needed to precisely capture the structure of optimal strategies. 

\paragraph{Interpretations in algorithmic complexity.} In our opinion, one of the most exciting possible developments would be to revisit classical questions in algorithmic complexity under the lens of random games. In light of the statistical physics interpretation presented in the introduction, we are tempted to argue that the framework could provide heheuristic motivation for the universality of certain phenomena in algorithmic complexity (e.g., NP completeness in computational complexity, or the exponential advantage conferred by randomization in online algorithms). We therefore go back to the framework of Yao, where we consider the matrix associated with an algorithmic problem, in which rows are indexed by algorithms and columns are indexed by inputs, and where the values of the matrix are given by the cost of an algorithm on an input. Of course, there is often no natural enumeration of algorithms and instances for a given algorithmic problem, and part of the challenge would be to find the right problems for which the corresponding matrix is accurately modeled by a random matrix. In the following paragraph, we provide a high-level and informal discussion on a classical computational complexity question under the lens of random games. 

Algorithmic problems that are solvable in polynomial time with randomness ($RP$) are widely conjectured to be solvable in polynomial time by a deterministic algorithm ($P$). This conjecture, denoted $P=RP$, should come as a surprise after reading this paper because in typical zero-sum games we saw that mixed strategies have a significant advantage over pure strategies (cf. discussion in the related work on matrices with saddle-points).  Therefore, this suggests that matrices that describe the complexity of algorithmic problems in $P$ are all very structured (far from square matrices with i.i.d. entries). Specifically, either they are very skewed $(n>>m)$ with many more rows (possible algorithms) than there are columns (possible instances), or, alternatively, they satisfy structural properties that strongly violate the independence assumption, such as that given two rows there exists a third row that interpolates between these two (effectively allowing to mimic a mixed strategy with one pure strategy). Thus, we believe that understanding the class of random matrices for which we have $\min_{\bx\in \Delta_n}\max_{j\in [m]}(\bx^T M)_j \approx \min_{i\in[n]} \max_{j\in [m]} M_{ij}$ could lead to insights on the structure of the class of algorithmic problems that can be solved in polynomial time. More generally, we believe that random games provide a natural model for exploring the magnitude of the advantage that randomization can confer to algorithms and decision rules.

\subsection{Conclusion}
In this paper, we give new tail bounds on the value of a random game. Our results rely on Gaussian comparison principles and on convex geometry. We believe that there are promising avenues for research in random games at the intersection of mathematics, statistical physics, and computer science, and we have highlighted some open directions above. 

\paragraph{Acknowledgments.} RC thanks Moïse Blanchard and Vianney Perchet for helpful discussions. 

\newpage
\appendix

\section{Experiments and simple concentration results}
\subsection{Empirical evidence for the scaling of the game value.}\label{ap:experiment}
Below we provide some empirical evidence of the scaling of the value of $v(M)$ for square matrices. Although our theoretical results only hold for Gaussian matrices, Figure \ref{fig:scaling} also provides empirical evidence that the scaling of the game value generalizes to matrices with independent Rademacher entries (i.e., taking value $\pm 1$ with probability $1/2$). Note that the original experiments leading to the conjecture that the standard deviation of $v(M)$ is $\Theta(1/n)$ is due to \cite{faris1987value}. We refer to their work for more thorough discussion on experimental settings. 
\begin{figure}[h]
    \centering
\includegraphics[width=0.6\linewidth]{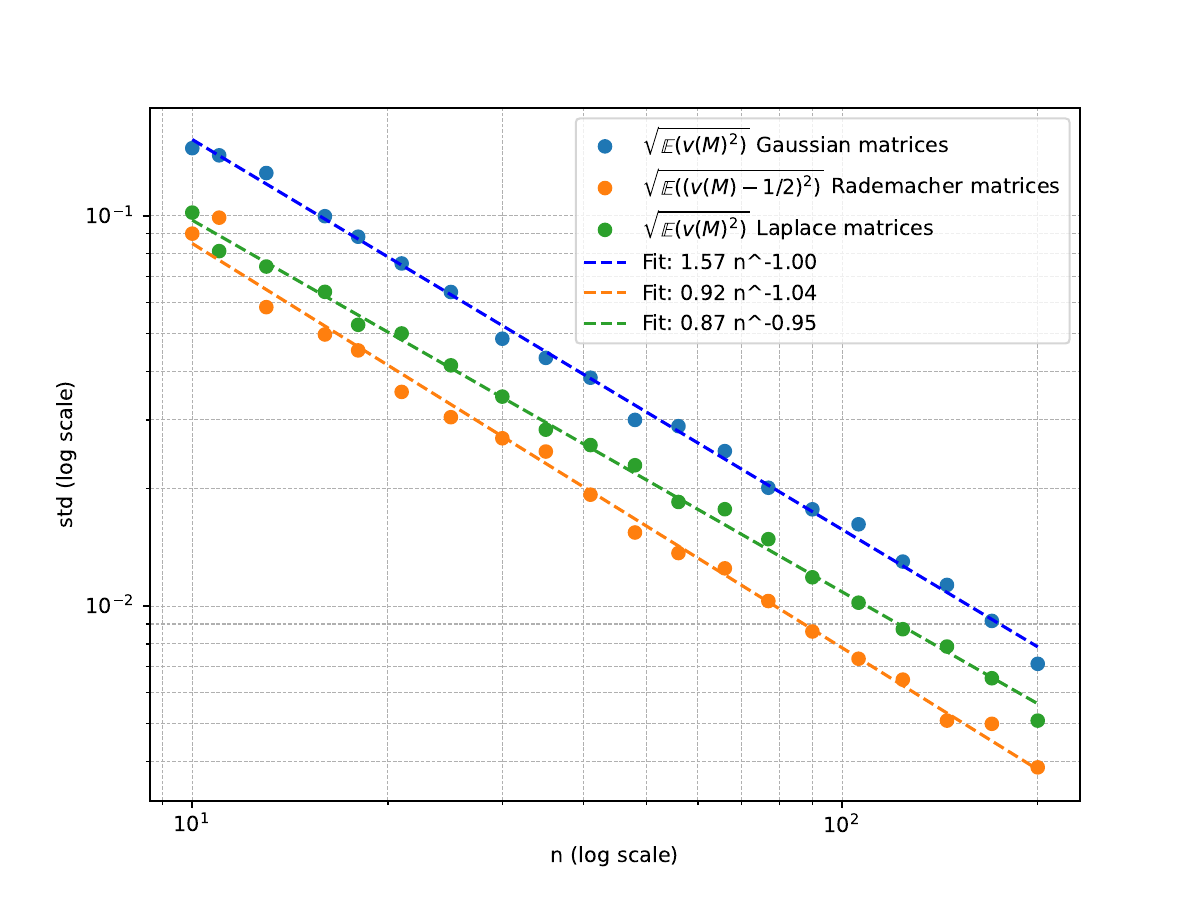}
    \caption{Empirical scaling of random game value $v(M)$ for square matrices with size $n$ going from $5$ to $200$. Each data point corresponds to an experiment with batch size $200$.
    }
    \label{fig:scaling}
\end{figure}
\subsection{Lower bound on $\Ebb(||\bg^+||)$}\label{ap:technical-binom}
We note that by conditioning on the number of positive components of $\bg$, we have $\Ebb(||\bg^+||) = \Ebb(a_X)$, where $X\sim\Bcal(1/2,n)$, and where $a_n$ is the expectation of the chi distribution with $n$ degrees of freedom. Then, using \eqref{eq:bound-a}, we obtain, 
\begin{equation}\label{eq:lb-binom}
    \Ebb(||\bg^+||) \geq \Ebb\left(\sqrt{X}-\frac{\one_{X>0}}{2\sqrt{X}}\right),
\end{equation}
and we now analyze the right-hand side of Equation \eqref{eq:lb-binom}. 

We first derive a lower bound on $\Ebb(\sqrt{X})$. Noticing that $\forall x\in \Rbb_+ : \sqrt{x}\geq 1+\frac{x-1}{2}-\frac{(x-1)^2}{2}$ and applying this inequality to $X/\mu$, for $\mu = n/2$, we get, using that $\Ebb(X/\mu) = 1$ and that $\Var(X/\mu) = \frac{\Var(X)}{\mu^2} = \frac{1}{2\mu}$, we get, $\Ebb\left(\sqrt{\frac{X}{\mu}}\right) \geq 1- \frac{1}{4\mu}$, thus, 
\begin{align}\label{eq:lb1}
\Ebb(\sqrt{X}) &\geq \sqrt{\mu}-\frac{1}{4\sqrt{\mu}} = \sqrt{\frac{n}{2}}-\frac{1}{2\sqrt{2n}}.
\end{align}

Then, we provide an upper bound on $\Ebb\left(\frac{1}{\sqrt{X}}\oneb_{X>0}\right)$. By Chernoff bounds we have that $\Pbb(X\leq n/4)\leq \exp\left(-\frac{1}{8}n\right)$ and thus, 
\begin{equation}\label{eq:lb2}
    \Ebb\left(\frac{1}{\sqrt X} \oneb_{X>0}\right) \leq \exp\left(-\frac{1}{8}n\right) + \frac{2}{\sqrt{n}}\leq \frac{4}{\sqrt{n}}.
\end{equation}  
Combining Equations \eqref{eq:lb-binom}, \eqref{eq:lb1}, \eqref{eq:lb2} entails $\Ebb(||\bg^+||)\geq \sqrt{\frac{n}{2}}-\frac{3}{\sqrt{n}}$.

\subsection{Consequence on expected value of rectangular matrices}\label{ap:expectation-rectangular}
We explain here how our Theorem \ref{th:main} implies that, for $n<m\leq 2n$, we have 
    \begin{equation*}
        \Omega\left(\frac{\sqrt{m}-\sqrt{n}}{n}\right)-\Ocal\left(\frac{1}{n}\right) \leq \Ebb(v(M))\leq \Ocal\left(\frac{\sqrt{m}-\sqrt{n}}{n}\right)+\Ocal\left(\frac{1}{n}\right).
    \end{equation*}
For the upper bound, we consider the following decomposition, where we denote for shorthand $A = 40\frac{\sqrt{m}-\sqrt{n}}{n}$, and apply the tail bounds of Theorem \ref{th:main} and Proposition \ref{prop:naive-uniform}, 
\begin{align*}
      \Ebb(v(M)) &\leq  \int_{0}^\infty \Pbb(v(M)\geq s)ds\\
      &\leq \int_0^A \Pbb(v(M)\geq s)ds +\int_{0}^{1-A} \Pbb(v(M)\geq A+s)ds+ \int_{1}^\infty \Pbb(v(M)\geq s)ds\\
      &\leq A + \int_0^{1-A}c_1\exp(-c_2n^2s^2)+c_3\exp(-c_4 n)ds+\int_{1}^\infty m\exp(-ns^2)ds\\
      &\leq A+\Ocal\left(\frac{1}{n}\right),
    \end{align*}
    where we used by change of variable $\int_{0}^\infty\exp(-c_2n^2 s^2)ds = \frac{1}{n\sqrt{c_2}}\int_{0}^\infty\exp(-u^2)du = \Ocal(\frac{1}{n})$ and the direct computation $\int_{1}^\infty\exp(-ns^2)ds \leq \int_1^\infty \exp(-ns)ds = \exp(-n)/n$.

    For the lower bound, we use the shorthand $B = \frac{1}{4}\frac{\sqrt{m}-\sqrt{n}}{n}-\frac{\sqrt{\ln(c_1)}}{n\sqrt{c_2}}$ to get,
    \begin{align*}
        \Ebb(v(M))&\geq \int_0^{B}\Pbb(v(M)\geq s)ds - \int_{0}^\infty\Pbb(v(M)\leq -t)dt\\
        &\geq B\left(1 - e^{-1} - c_3\exp(-c_4n))\right)-\Ocal\left(\frac{1}{n}\right)\\
        &\geq \Omega\left( \frac{\sqrt{m}-\sqrt{n}}{n}\right)-\Ocal\left(\frac{1}{n}\right).
    \end{align*}
We note that the slightly unconventional $\Omega(\cdot)-\Ocal(\cdot)$ notation here reflects the fact that for $m = n+ cst$, e.g., for $m=n+1$, we do not argue about that there is a lower bound of $\Ebb(v(M))$ in $\Omega(1/n)$.

\subsection{Exploiting the symmetry in optimization problem \eqref{eq:lp-init}}\label{ap:symmetry}
Using that $v(M) = -v(-M^T) =^{dist} -v(M^T)$, we get that, 
$$\Pbb(v(M)\geq t) = \Pbb(v(M^T)\leq -t).$$
Then, we recall the definition $\Phi_2(\bg,\bh) = \min_{\bu\in \Delta_n}\max_{\bv\in \Delta_m}  ||\bv||\bg^T\bu+||\bu||\bh^T\bv$ and also define $\Psi_2(\bg,\bh) = \min_{\bv\in \Delta_m}\max_{\bu\in \Delta_n}||\bv||\bg^T\bu+||\bu||\bh^T\bv$. We can use the basic minimax inequality to write
\begin{equation*}
    -\Psi_2(-\bg,-\bh) = \max_{\bv\in \Delta_m}\min_{\bu\in \Delta_n} ||\bv||\bg^T\bu+||\bu||\bh^T\bv \leq \Phi_2(\bg,\bh).
\end{equation*}
Thus, also using $(-\bg,-\bh)=^{dist}(\bg,\bh)$ we have that
\begin{equation*}
    \Pbb(\Phi_2(\bg,\bh)\geq t)\geq \Pbb(-\Psi_2(\bg,\bh)\geq t).
\end{equation*}
We can now conclude by applying Equation \eqref{eq:gordon-app1} to the transposed matrix $M^T\in \Rbb^{m\times n}$, obtaining an upper tail bound for $M$ in the form of 
\begin{equation*}
    \Pbb(v(M)\geq t) = \Pbb(v(M^T)\leq -t) \leq 2 \Pbb(\Psi_2(\bg,\bh)\leq -t)\leq 2\Pbb(\Phi_2(\bg,\bh)\geq t).
\end{equation*}
Note that some statements of Gordon's inequality explicitly mention the minimax properties of $\Phi_2(\bg,\bh)$ in the theorem statement, e.g., \cite{miolane2021distribution}. 

\subsection{Concentration results used in the proof of Theorem \ref{th:main}}\label{sec:concentration}
We conclude the proof of Theorem \ref{th:main} by providing some details on the concentration inequalities that we applied along the way. In the following, when we write, \textit{with high probability} (abbreviated w.h.p.), we mean, with probability at least $1-\Ocal(\exp(-c_4 n))$ for some positive constant $c_4$. 

\paragraph{Bounds on $||\bh^+||, \onebf^T\bh^+$ and on $||\bg^-||,\onebf^T\bg^-$.} All the inequalities on those terms are based on the Gaussian concentration inequality introduced in the preliminaries. Note that we use that $\Ebb(||\bh^+||)\in [\sqrt{m/2}-3/\sqrt{m},\sqrt{m/2}]$ and $\Ebb(\onebf^T\bh^+) = m/\sqrt{2\pi}$. 

\paragraph{Bound on $C'(\mu')$.} We have that $C'(\mu') = \onebf^T(-\bg-\mu'\onebf)^+$, thus by Cauchy-Schwarz, $C'(\mu')\leq ||\onebf|||(-\bg-\mu'\onebf)^+||= \sqrt{n}||\bh^+||$. Thus, with high probability, $C'(\mu')\leq \sqrt{nm}\leq \sqrt{2}n$. 

\paragraph{Bounds on $C(\mu)$.} We first show that $\mu\leq 1$ with high probability. For this, observe that $\Ebb(||(\bh-\onebf)^+||)\leq \sqrt{\Ebb(||(\bh-\onebf)^+||^2)}\leq \sqrt{\frac{m}{5}}\leq \sqrt{\frac{2n}{5}}$, where we use that $\Pbb(\Ncal(0,1)\geq 1)\leq \frac{1}{5}$ and $m\leq 2n$.
Then, we use that $\sqrt{\frac{n}{2}}-\frac{3}{\sqrt n}\leq \Ebb(||\bg^-||)$ and thus have that $||(\bh-\onebf)^+||\leq ||\bg^-||$ w.h.p, and thus, $\mu \leq 1$ w.h.p. Thus, $C(1)\leq C(\mu)$ w.h.p., where $\Ebb(C(1)) = m\Ebb((h_1-1)^+)=\varphi(1)-(1-\Phi(1))>0.083m$. By Gaussian concentration inequalities, w.h.p., $C(1)\geq m/20$ and therefore, w.h.p. $C(\mu)\geq m/20$.

\section{Gordon's inequality and Gaussian comparison principles}\label{sec:gaussian-comparison}
In this section, we provide some background on the Gaussian comparison principles, which we used in Section \ref{sec:main}. We start by stating the version of Gordon's inequality exactly as we use it in this paper, then, for reader's convenience, we provide a proof sketch of the result. The following version of Gordon's inequality is an analog of Corollary G.1 in \cite{miolane2021distribution}.

\thgordon*
\begin{proof}[Proof sketch] We first focus on proving Equation \eqref{eq:base-eq} in the case where $D_x$ and $D_y$ are finite. We define two Gaussian vectors indexed by $D_x\times D_y$ by 
\begin{align*}
    X(\bx,\by) &= \bx^T G\by+z||\bx||||\by||,\\
    Y(\bx,\by) &= ||\by||\bg^T\bx + ||\bx||\bh^T\by,
\end{align*}
where $z\sim\Ncal(0,1)$ is an independent standard Gaussian scalar. Direct computation for any $\bx,\bx',\by,\by'\in D_x^2\times D_y^2$ yields,
\begin{align*}
    \Ebb(X(\bx,\by)X(\bx',\by')) &= (\bx^T \bx')(\by^T \by') + ||\bx||||\by||||\bx'||||\by'||,\\
    \Ebb(Y(\bx,\by)Y(\bx',\by')) &= ||\by||||\by'||\bx^T\bx' + ||\bx||||\bx'||\by^T\by'.
\end{align*}
Thus, using $(\bx^T\bx'-||\bx||||\bx'||)(\by^T\by'-||\by||||\by'||)\geq 0$, we get,
\begin{align*}
    \Ebb(X(\bx,\by)X(\bx',\by')) \geq  \Ebb(Y(\bx,\by)Y(\bx',\by')),
\end{align*}    
with equality when $\bx = \bx'$. Then applying Gordon's inequality (as in Theorem \ref{th:gordon-orignal} below) provides for all $t\in \Rbb$,
\begin{equation*}
    \Pbb\left(\min_{\bx\in D_x}\max_{\by\in D_y}X(\bx,\by)\leq t\right)\leq \Pbb\left(\min_{\bx\in D_x}\max_{\by\in D_y}Y(\bx,\by)\leq t\right).
\end{equation*}
Conditioning on the event $z\leq 0$, of probability $1/2$, we have that  $\Pbb\left(\min_{\bx\in D_x}\max_{\by\in D_y}X(\bx,\by)\leq t\right)\geq \frac{1}{2}\Pbb_{z\leq 0}\left(\min_{\bx\in D_x}\max_{\by\in D_y}X(\bx,\by)\leq t\right)\geq \frac{1}{2}\Pbb(\min_{\bx\in D_x}\max_{\by\in D_y}\bx^TG\by\leq t)$, thus,
\begin{equation}\label{eq:final}
    \Pbb\left(\min_{\bx\in D_x}\max_{\by\in D_y}\bx^T G\by \leq t\right)\leq 2\Pbb\left(\min_{\bx\in D_x}\max_{\by\in D_y}Y(\bx,\by)\leq t\right).
\end{equation}
Notice that Equation \eqref{eq:final} imposes no constraint on the cardinality of $D_x$ and $D_y$. The initial statement \eqref{eq:base-eq}, which allows for infinite compact sets $D_x,D_y$ is then naturally derived by taking finite coverings of increasing granularity of $(D_x\times D_y)$ (see, e.g., \cite{thrampoulidis2015regularized}). 
\end{proof}
\begin{remark}[Historical perspective]
The original exposition of Gordon's inequality (Theorem~1.1 in \cite{gordon1985some}, 1985) concerns a finite family of Gaussian variables (Theorem \ref{th:gordon-orignal} below). This original work already contains some other formulations (e.g., the case where $D_x$ is the unit sphere as in Corollary 1.2 in \cite{gordon1985some}). The generalization to arbitrary compact sets is, to the best of our knowledge, due to \cite{thrampoulidis2015regularized}.
\end{remark}

\begin{theorem}[Gordon's inequality (Theorem 1.1 in \cite{gordon1985some})]\label{th:gordon-orignal}
Let $(X_{i,j})$ and $(Y_{i,j})$, with $i\in[n]$, $j\in [m]$, be two centered Gaussian random vectors such that
\[
\begin{cases}
\mathbb{E}[X_{i,j}^2] = \mathbb{E}[Y_{i,j}^2], & \text{for all } i, j, \\[4pt]
\mathbb{E}[X_{i,j} X_{i,k}] \le \mathbb{E}[Y_{i,j} Y_{i,k}], & \text{for all } i, j, k, \\[4pt]
\mathbb{E}[X_{i,j} X_{l,k}] \ge \mathbb{E}[Y_{i,j} Y_{l,k}], & \text{for all } i \ne l \text{ and all } j, k.
\end{cases}
\]
Then, for all real numbers $(\lambda_{i,j})_{i,j\in[n]\times [m]}$,
\[
\mathbb{P}\!\left( \bigcap_{i=1}^n \bigcup_{j=1}^m \{ X_{i,j} \ge \lambda_{i,j} \} \right)
\ \ge\
\mathbb{P}\!\left( \bigcap_{i=1}^n \bigcup_{j=1}^m \{ Y_{i,j} \ge \lambda_{i,j} \} \right).
\]
In particular, for all real $t$, 
$$\Pbb\left(\min_i \max_j X_{i,j}\leq t\right)\leq \Pbb\left(\min_i \max_j Y_{i,j}\leq t\right).$$
\end{theorem}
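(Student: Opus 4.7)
The plan is to prove Gordon's inequality via the Slepian--Gordon interpolation technique: smooth the indicator of the target event, interpolate along a Gaussian path from $Y$ to $X$, and use Gaussian integration by parts to turn the derivative into a sum whose sign is dictated by the covariance hypotheses.

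First, I would replace the event $E(\bx)=\bigcap_i\bigcup_j\{x_{ij}\ge\lambda_{ij}\}$ by a smoothed analogue. Let $f_\epsilon:\Rbb\to[0,1]$ be a smooth nondecreasing approximation of $\one_{x\ge 0}$ and set
\[
F_\epsilon(\bx)=\prod_{i=1}^n\Bigl(1-\prod_{j=1}^m\bigl(1-f_\epsilon(x_{ij}-\lambda_{ij})\bigr)\Bigr),
\]
so that $F_\epsilon\to\one_{E}$ pointwise as $\epsilon\to 0$, up to a Lebesgue-null set that is harmless for Gaussian distributions. It is enough to show $\Ebb[F_\epsilon(X)]\ge\Ebb[F_\epsilon(Y)]$ and then pass to the limit.

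Second, I would make $X$ and $Y$ independent (by replacing $Y$ by an independent copy with the same law, which does not change $\Ebb[F_\epsilon(Y)]$) and define the interpolation $Z(t)=\sqrt{t}\,X+\sqrt{1-t}\,Y$ for $t\in[0,1]$, together with $\phi(t)=\Ebb[F_\epsilon(Z(t))]$. Differentiating under the expectation and applying Gaussian integration by parts to the $X$ and $Y$ contributions separately, one arrives at
\[
\phi'(t)=\frac12\sum_{(i,j)\ne(k,l)}\bigl(\Ebb[X_{ij}X_{kl}]-\Ebb[Y_{ij}Y_{kl}]\bigr)\,\Ebb\!\left[\frac{\partial^2 F_\epsilon}{\partial z_{ij}\,\partial z_{kl}}(Z(t))\right],
\]
where the diagonal terms $(i,j)=(k,l)$ cancel thanks to the matched variances $\Ebb[X_{ij}^2]=\Ebb[Y_{ij}^2]$.

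The crux is identifying the signs of the mixed second partials. Writing $u_{ij}=1-f_\epsilon(x_{ij}-\lambda_{ij})\in[0,1]$ and $g_i(\bx_i)=1-\prod_j u_{ij}$, so that $F_\epsilon=\prod_i g_i$, direct computation yields $\partial_{x_{ij}}g_i=f_\epsilon'(x_{ij}-\lambda_{ij})\prod_{j'\ne j}u_{ij'}\ge 0$, whereas for $j\ne l$ in the same row, $\partial^2_{x_{ij}\,x_{il}}g_i=-f_\epsilon'(x_{ij}-\lambda_{ij})f_\epsilon'(x_{il}-\lambda_{il})\prod_{j'\ne j,l}u_{ij'}\le 0$. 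It follows that $\partial^2_{ij,\,il}F_\epsilon\le 0$ when $i=k$ and $j\ne l$, while $\partial^2_{ij,\,kl}F_\epsilon=\bigl(\prod_{i'\ne i,k}g_{i'}\bigr)\cdot\partial_j g_i\cdot\partial_l g_k\ge 0$ when $i\ne k$. Pairing these signs with the three covariance hypotheses—$\Ebb[X_{ij}X_{ik}]\le\Ebb[Y_{ij}Y_{ik}]$ within a row and $\Ebb[X_{ij}X_{lk}]\ge\Ebb[Y_{ij}Y_{lk}]$ across rows—makes every summand in $\phi'(t)$ nonnegative. Hence $\phi$ is nondecreasing, $\Ebb[F_\epsilon(X)]\ge\Ebb[F_\epsilon(Y)]$, and letting $\epsilon\to 0$ delivers the main inequality. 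The ``in particular'' statement is then obtained by specializing $\lambda_{ij}=t$ and observing that the event $\{\min_i\max_j X_{ij}\ge t\}$ coincides, up to a Gaussian null set, with $\bigcap_i\bigcup_j\{X_{ij}\ge t\}$.

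The main obstacle I foresee is the combinatorial bookkeeping behind the signs of the mixed partials of $F_\epsilon$: it is exactly the product-of-sums structure encoding ``min over $i$ of max over $j$'' that produces a \emph{negative} second partial in the same row and a \emph{positive} one across rows, which in turn is what mirrors the asymmetric covariance hypotheses. Everything else—the smoothing step, the interpolation, the Gaussian integration by parts, and the $\epsilon\to 0$ limit—is routine once this sign analysis is in hand.
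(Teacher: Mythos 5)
Your proposal is correct and is essentially the same argument the paper sketches: the paper derives Gordon's inequality by applying Kahane's comparison theorem (itself proved by Gaussian interpolation) to the indicator of $\bigcap_i\bigcup_j\{x_{ij}\ge\lambda_{ij}\}$, observing that its distributional cross derivatives have exactly the signs you compute for $F_\epsilon$. You simply unpack that reduction into a self-contained interpolation argument, replacing the paper's appeal to positive association of Gaussians with the equivalent Gaussian integration-by-parts identity and handling the smoothing of the indicator explicitly; the sign bookkeeping for $\partial^2_{ij,il}F_\epsilon\le 0$ (same row) and $\partial^2_{ij,kl}F_\epsilon\ge 0$ (different rows) matches the paper's claim verbatim.
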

\begin{proof}[Proof sketch.] Although the original proof in \cite{gordon1985some} is involved, this theorem can be obtained as a direct application of Kahane's inequality \cite{kahane1986inegalite} (Theorem \ref{th:kahane} below) to $N  = n\times m$, where elements of $[N]$ are indexed by pairs $(i,j)\in[n]\times[m]$ using the Euclidean division by $m$, and to the function $f(\cdot)$ defined by $\forall x\in \Rbb^N : f(x) = \one\left( \bigcap_{i=1}^n \bigcup_{j=1}^m \{ x_{(i,j)} \ge \lambda_{i,j} \} \right)$. Note that the assumptions of Kahane's inequality are met because the cross derivatives (in the sense of distributions) of $f(\cdot)$ satisfy $\partial_{(i,j)}\partial_{(l,k)} f(x)\geq 0$ for $i\neq l$,  and $\partial_{(i,j)}\partial_{(i,k)} f(x)\leq 0$.   
\end{proof}

\begin{theorem}[Kahane's inequality \cite{kahane1986inegalite}]\label{th:kahane}
Let $X = (X_1, \dots, X_N)$ and $Y = (Y_1, \dots, Y_N)$ be Gaussian random variables in $\mathbb{R}^N$.  
Assume that $A$ and $B$ form a partition of $\{1, \dots, N\} \times \{1, \dots, N\}$ such that 
\begin{align*}
    \mathbb{E}[X_i X_j] \le \mathbb{E}[Y_i Y_j] \quad \text{if } (i,j) \in A,\quad \text{and},\quad 
    \mathbb{E}[X_i X_j] \ge \mathbb{E}[Y_i Y_j] \quad \text{if } (i,j) \in B.
\end{align*}

Let $f$ be real a function on $\mathbb{R}^N$ whose second derivatives (in the sense of distributions) satisfy
\[
\partial_{ij} f \ge 0 \quad \text{if } (i,j) \in A, \quad \text{and},\quad  \partial_{ij} f \le 0 \quad \text{if } (i,j) \in B.
\]
Then
\[
\mathbb{E}[f(X)] \le \mathbb{E}[f(Y)].
\]
\end{theorem}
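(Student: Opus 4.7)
The plan is to use the classical Gaussian interpolation argument, combined with a mollification step to handle the fact that the hypotheses on second derivatives are only distributional. First, I would reduce to the case where $f$ is smooth: replace $f$ by $f_\epsilon = f * \rho_\epsilon$ for a smooth, compactly supported mollifier $\rho_\epsilon$. Since convolution commutes with differentiation, the sign conditions $\partial_{ij} f_\epsilon \ge 0$ on $A$ and $\partial_{ij} f_\epsilon \le 0$ on $B$ are preserved pointwise; after a truncation ensuring integrability, one passes to the limit $\epsilon \to 0$ by dominated convergence to recover the general statement.

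Assuming without loss of generality that $X$ and $Y$ are independent Gaussian vectors on the same probability space, I would introduce the interpolation $Z(t) = \sqrt{t}\, X + \sqrt{1-t}\, Y$ for $t \in [0,1]$ and set $\varphi(t) = \mathbb{E}[f(Z(t))]$, so that $\varphi(0) = \mathbb{E}[f(Y)]$ and $\varphi(1) = \mathbb{E}[f(X)]$. Differentiating under the integral yields
\[
\varphi'(t) = \sum_{i=1}^N \mathbb{E}\!\left[\partial_i f(Z(t))\left(\frac{X_i}{2\sqrt{t}} - \frac{Y_i}{2\sqrt{1-t}}\right)\right].
\]
Applying Gaussian integration by parts (Stein's lemma) to each term, using the independence of $X$ and $Y$, I get
\[
\mathbb{E}[X_i \, \partial_i f(Z(t))] = \sqrt{t} \sum_k \Sigma^X_{ik}\, \mathbb{E}[\partial_{ik} f(Z(t))],
\]
and analogously with $\Sigma^Y$ for the $Y$-term. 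The factors $\sqrt{t}$ and $\sqrt{1-t}$ cancel, collapsing the expression to
\[
\varphi'(t) = \frac{1}{2}\sum_{i,k=1}^N \bigl(\Sigma^X_{ik} - \Sigma^Y_{ik}\bigr)\, \mathbb{E}[\partial_{ik} f(Z(t))].
\]

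Finally, I would sign each summand using the hypotheses: for $(i,k) \in A$, the factor $\Sigma^X_{ik} - \Sigma^Y_{ik}$ is nonpositive while $\partial_{ik} f \ge 0$; for $(i,k) \in B$, both inequalities reverse. In either case the product is nonpositive, so $\varphi'(t) \le 0$ on $(0,1)$, and integrating gives $\mathbb{E}[f(X)] = \varphi(1) \le \varphi(0) = \mathbb{E}[f(Y)]$. The main obstacle I expect is the mollification step: one has to ensure that the distributional sign conditions on $\partial_{ij} f$ genuinely transfer to the smoothed $f_\epsilon$ in a pointwise sense, and that the limit $\epsilon \to 0$ commutes with the expectation. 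This typically requires an assumption of (at most) polynomial growth of $f$, or an initial truncation argument to secure uniform integrability of $f(Z(t))$ in $t$; once that technical bookkeeping is done, the interpolation itself is a routine Gaussian computation.
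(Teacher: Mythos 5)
Your proof is correct. You and the paper both run the same Gaussian interpolation $Z(t)=\sqrt{t}\,X+\sqrt{1-t}\,Y$ and differentiate $\varphi(t)=\mathbb{E}[f(Z(t))]$; the difference lies in how the sign of $\varphi'(t)$ is established. You apply Stein's lemma (Gaussian integration by parts) directly, which collapses $\varphi'(t)$ into the single sum $\tfrac{1}{2}\sum_{i,k}(\Sigma^X_{ik}-\Sigma^Y_{ik})\,\mathbb{E}[\partial_{ik}f(Z(t))]$, and then each term is signed by the hypotheses term by term. The paper instead keeps $\varphi'(t)=\sum_i\mathbb{E}[\partial_i f(Z(t))\,\partial_t Z_i(t)]$ in first-derivative form, observes for each $i$ that $(Z(t),\partial_t Z_i(t))$ is jointly Gaussian with covariances $\mathbb{E}[Z_j(t)\partial_t Z_i(t)]=\tfrac12\mathbb{E}[Y_iY_j-X_iX_j]$, and invokes a positive-association lemma (Lemma~\ref{lem:positive-assoc}, itself proved by a second interpolation via Gaussian regression) to control $\mathbb{E}[h(Z)Z']$. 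The two approaches are essentially dual: your Stein step is precisely what underlies the paper's positive-association lemma, but you use it once and globally, whereas the paper factors it out as a reusable lemma and applies it coordinate by coordinate. Your route is the more standard textbook proof and is slightly shorter; the paper's route makes the role of monotonicity/association of the Gaussian field more visible, which is arguably of independent pedagogical interest. You are also right that mollification and a growth/integrability hypothesis are the genuine technical points needed to make the distributional-derivative hypothesis and the differentiation-under-the-integral step rigorous; the paper's sketch elides these in the same way.
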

\begin{proof}[Proof sketch.] Consider two independent copies of vector $X$ and $Y$, as well as the interpolating random variable $Z(t) = \sqrt{1-t}X+\sqrt{t}Y$. The goal is to show that $\gamma(0)\leq \gamma(1)$ where 
$$\gamma(t) = \Ebb(f(Z(t))).$$
It suffices to show that all the terms in the sum $\gamma'(t) = \sum_{i\in [n]}\Ebb(\partial_i f(Z(t))\partial_t Z_i(t))$ are positive. For fixed $t\in[0,1],i\in[N]$, $(Z(t),\partial_t Z_i(t))$ is a Gaussian vector such that the sign of $\Ebb(Z_j(t)\partial_t Z_i(t)) = \frac{1}{2}\Ebb(Y_iY_j-X_iX_j)$ for $j\in [n]$ indicates the monotonicity of $\partial_i f(\cdot)$ in its $j$-th coordinate. We can thus apply the positive association of Gaussian principle, stated below, to conclude the proof. 
\begin{lemma}\label{lem:positive-assoc}
    If $(Z,Z')\in \Rbb^{n+1}$ is a centered Gaussian vector in which $Z'$ is a scalar satisfying $\forall i\in [n] : \Ebb(Z_iZ')\geq 0$ and $h:\Rbb^n\rightarrow\Rbb$ has non-negative first derivative, then, $\Ebb(h(Z)Z')\geq 0$. 
\end{lemma}
To show Lemma \ref{lem:positive-assoc}, we write the Gaussian regression of $Z_i$ to $Z'$ for all $i\in [n]$ : $Z_i = \alpha_i Z' + W_i$, where $\alpha_i = \Ebb(Z_iZ')/\Ebb(Z'^2)\geq 0$, and where $W_i$ is a centered Gaussian random variable that is independent of $Z'$. Then, introducing $Z(u) = u\alpha_i Z'+W_i$, we note that $\zeta(u) = \Ebb(h(Z(u))Z')$ satisfies $\zeta(0)=0$ and $\zeta'(u) = \sum_{i\in [n]} \alpha_i \Ebb(\partial_i h(Z(u))Z'^2)\geq 0$, and, therefore, $\zeta(1) = \Ebb(h(Z)Z')\geq 0$.      
\end{proof}

\begin{remark}[Reference to detailed proofs] The proof of Theorem \ref{th:gordon} from Theorem \ref{th:gordon-orignal} can be found in the Appendix G.3 of \cite{miolane2021distribution}. The proof of Theorem \ref{th:gordon-orignal} from first principles can be found in the original work of Gordon \cite{gordon1985some}, but the more direct proof outlined here---due to Kahane \cite{kahane1986inegalite}---is available in the Chapter 3 of the book of Talagrand and Ledoux \cite{ledoux2013probability} (p.75). 
\end{remark}

\addcontentsline{toc}{section}{References}

\bibliography{biblio}

\end{document}